\newcommand{\Fin}{\mathcal{F}\kern-1pt\mathit{in}}
\newcommand{\w}{\omega}
\newcommand{\F}{\mathcal F}
\newcommand{\U}{\mathcal U}
\newcommand{\V}{\mathcal V}
\newcommand{\C}{\mathcal C}
\newcommand{\Ra}{\Rightarrow}
\newcommand{\dom}{\mathrm{dom}}
\newcommand{\Tau}{\mathcal T}
\newcommand{\Ord}{\mathsf{T\!sL}_\Omega}
\newcommand{\TsLone}{\mathsf{T}_{\!\!\mathsf{1}\!}\mathsf{sL}}
\newcommand{\sTsLone}{\mathsf{s\!T}_{\mathsf{\!\!1\!}}\mathsf{sL}}
\newcommand{\TsL}{\mathsf{T\!sL}}
\newcommand{\sTsL}{\mathsf{s\!T\!sL}}
\newcommand{\TsLw}{\mathsf{T\!sL}_\omega}
\newcommand{\TS}{\mathsf{T\!S}}
\newcommand{\eC}{\mathsf{e}{:}\mathcal C}
\newcommand{\hC}{\mathsf{h}{:}\mathcal C}
\newcommand{\peC}{\mathsf{pe}{:}\mathcal C}
\newcommand{\pC}{\mathsf{p}{:}\mathcal C}
\newcommand{\eTS}{\mathsf{e}{:}\!\mathsf{TS}}
\newcommand{\hTS}{\mathsf{h}{:}\!\mathsf{TS}}
\newtheorem{theorem}{Theorem}[section]
\newtheorem{lemma}[theorem]{Lemma}
\newtheorem{corollary}[theorem]{Corollary}
\newtheorem{example}[theorem]{Example}
\theoremstyle{definition}
\newtheorem{definition}[theorem]{Definition}
\newtheorem{remark}[theorem]{Remark}
\newtheorem{problem}[theorem]{Problem}
\title[Characterizing chain-compact and chain-finite topological semilattices]{Characterizing chain-compact and chain-finite\\ topological semilattices}
\author{Taras Banakh and Serhii Bardyla}
\address{T.~Banakh: Ivan Franko National University of Lviv
(Ukraine) and\newline\indent Jan Kochanowski University in Kielce
(Poland)}
\email{t.o.banakh@gmail.com}
\address{S.~Bardyla: Ivan Franko National University of Lviv, Universytetska 1, 79000, Lviv, Ukraine}
\email{sbardyla@yahoo.com}
\subjclass{22A26; 54D30; 54D35; 54H12}
\keywords{Chain-finite, chain-compact, $k$-complete, $\Tau_i$-closed, topological semilattice}
\begin{document}

\begin{abstract} In the paper we present various characterizations of chain-compact and chain-finite topological semilattices. A topological semilattice $X$ is called {\em chain-compact} (resp. {\em chain-finite\/}) if each closed chain in $X$ is compact (finite).
In particular, we prove that a (Hausdorff) $T_1$-topological semilattice $X$ is chain-finite (chain-compact) if and only if for any closed subsemilattice $Z\subset X$ and any continuous homomorphism $h:Z\to Y$ to a (Hausdorff) $T_1$-topological semilattice $Y$ the image $h(Z)$ is closed in $Y$.
 \end{abstract}

 \maketitle

\section{Introduction} It is well-known
that a topological group $X$ is Raikov-complete (i.e., complete in its two-sided uniformity) if and only if for any isomorphic topological embedding $h:X\to Y$ into a Hausdorff topological group $Y$ the image $h(X)$ is closed in $Y$. This classical result was extended by Bardyla, Gutik and Ravsky \cite{BGR} who proved that for any isomorphic topological embedding $h:X\to Y$ of Raikov-complete topological group into a Hausdorff quasi-topological group $Y$ the image $h(X)$ is closed in $Y$. Those results yield external characterizations of the Raikov-completeness of topological groups.
This external approach can be used to define some completeness properties in the realm of topologized semigroups where the standard technique of uniformities does not work.

By a {\em topologized semigroup} we understand a semigroup $S$ endowed with a topology. If the semigroup operation $S\times S\to S$ is (separately) continuous, then $S$ is called a ({\em semi}){\em topological semigroup}. 


Topologized semigroups are objects of many categories, which differ by morphisms. The smallest is the category whose morphisms are continuous   homomorphisms between topologized semigroups. A bit wider is the category whose morphisms are partial homomorphisms, i.e., homomorphisms defined on subsemigroups. The largest is the category whose morphisms are multimorphisms between topologized semigroups.

By a {\em multimorphism}  between semigroups $X,Y$ we understand a multi-valued function $\Phi:X\multimap Y$ assigning to each point $x\in X$ a subset $\Phi(x)\subset Y$ so that $\Phi(x)\cdot \Phi(y)\subset \Phi(xy)$ for all $x,y\in X$.
For a multimorphism $\Phi:X\multimap Y$ the set $\Phi(X)=\bigcup_{x\in X}\Phi(x)$ is called the {\em range} of $\Phi$ and the set $\Phi^{-1}(Y)=\{x\in X:\Phi(x)\cap Y\ne\emptyset\}$ is the {\em domain} of $\Phi$.

A multimorphism $\Phi:X\multimap Y$ between semigroups $X,Y$ is called a {\em partial homomorphism} if for each $x\in X$ the set $\Phi(x)$ contains at most one point. Each partial homomorphism $\Phi:X\multimap Y$ can be identified with the unique function $\phi:\Phi^{-1}(Y)\to Y$ such that $\Phi(x)=\{\phi(x)\}$ for each $x\in\Phi^{-1}(Y)$. This function $\phi$ is a homomorphism from the subsemigroup $\Phi^{-1}(Y)$ of $X$ to the semigroup $Y$. A partial homomorphism $\Phi:X\multimap Y$ is called a {\em partial embedding} of $X$ to $Y$ if the function $\phi:\Phi^{-1}(Y)\to\Phi(X)$ is a topological isomorphism of the topologized semigroups $\Phi^{-1}(Y)$ and $\Phi(X)$.

For a class $\C$ of topologized semigroups by $\eC$, $\hC$, $\pC$, and $\peC$ we denote the categories whose objects are topologized semigroups in the class $\C$ and morphisms are isomorphic topological ${\mathsf e}$mbeddings, continuous $\mathsf h$omomorphisms, $\mathsf p$artial continuous homomorphisms and $\mathsf p$artial $\mathsf e$mbeddings of  topologized semigroups in the class $\C$, respectively.

In this paper we consider some concrete instances of the following general notion.

\begin{definition}
Let $\vec \C$ be a category of topologized semigroups and their multimorphisms. A topologized
semigroup $X$ is called {\em $\vec\C$-closed} if for any morphism $\Phi:X\multimap Y$ of the category $\vec \C$ the range $\Phi(X)$ is closed in $Y$.
\end{definition}

In particular, for a class $\C$ of topologized semigroup, a topologized semigroup $X$ is called
\begin{itemize}
\item {\em $\hC$-closed} if for any continuous homomorphism $f:X\to Y\in\C$ the image $f(X)$ is closed in $Y$;
\item {\em $\eC$-closed} if for each isomorphic topological embedding $f:X\to Y\in\C$ the image $f(X)$ is closed in $Y$;
\item {\em $\pC$-closed} if each closed subsemigroup of $X$ is $\hC$-closed;
\item {\em $\peC$-closed} if each closed subsemigroup of $X$ is $\eC$-closed.
\end{itemize}
The notations $\pC$ and $\peC$ are chosen because $\pC$-closed and $\peC$-closed topologized semigroups can be equivalently defined using $\mathsf p$artial homomorphisms and $\mathsf p$artial $\mathsf e$mbeddings of topologized semigroups.

The most natural candidate for the class $\C$ is the class
$\mathsf{T\!S}$ Hausdorff topological semigroups or its subclass $\mathsf{T\!G}$ consisting of topological groups.


In fact, under different names, $\vec\C$-closed topological semigroups have been already considered in mathematical literature.
In particular, ${\mathsf e}{:}\!\mathsf{T\!G}$-closed topological groups appeared in Raikov's characterization \cite{Raikov1946} of complete topological groups. In 1969 Stepp \cite{Stepp1969,Stepp75} introduced $\eTS$-closed and $\hTS$-closed topological semigroups calling them maximal and absolutely maximal semigroups, respectively. The study ${\mathsf h}{:}\!\mathsf{T\!G}$-closed and ${\mathsf p}{:}\!\mathsf{T\!G}$-closed topological groups (called $h$-complete and hereditarily $h$-complete topological groups, respectiely) was initiated by Dikranjan and Tonolo \cite{DTon} and continued by Dikranjan, Uspenskij \cite{DU}, Luka\'sc  \cite{Luk} and Banakh \cite{Ban}.  In \cite{Bardyla-Gutik-2012,
BGR,ChuchmanGutik2007,
GutikRepovs2008} Hausdorff $\eTS$-closed (resp. $\hTS$-closed) topological semigroups are called (absolutely) $H$-closed.

In this paper we are interested in detecting topological
semilattices which are $\vec\C$-closed for various categories $\vec\C$ of topologized semilattices. This topics will be continued in the paper \cite{BBc}.


By a {\em topologized semilattice} we understand a topological
space $X$ endowed with a binary operation $X\times X\to
X$, $(x,y)\mapsto xy$, which is associative, commutative and
idempotent (in the sense that $xx=x$ for all $x\in X$).
If the binary operation $X\times X\to X$ is (separately) continuous, then $X$ is called a ({\em semi}){\em topological semilattice}. 

Each semilattice carries a natural partial order $\le$ defined by $x\le
y$ iff $xy=x=yx$.
For an element $x$ of a semilattice $X$ by ${\downarrow}x=\{y\in X:y\le x\}$ and ${\uparrow}x=\{y\in X:x\le y\}$ we denote the {\em lower} and {\em upper cones} of $x$ in $X$. For a subset $A\subset X$ we put ${\downarrow}A:=\bigcup_{a\in A}{\downarrow}a$ and
${\uparrow}A:=\bigcup_{a\in A}{\uparrow}a$ be the {\em lower} and {\em upper sets} of $A$ in the semilattice $X$.

A subset $C$ of a semilattice $X$ is called a {\em
chain} if $xy=yx\in\{x,y\}$ for any $x,y\in C$. This is equivalent to
saying that any two elements $x,y\in C$ are comparable with respect
to the partial order $\le$. A semilattice is called {\em linear} if it is a chain.

A semilattice $X$ is called {\em
chain-finite} if each chain in $X$ is finite. A topological semilattice $X$ is called {\em chain-compact} if each closed chain in $X$ is compact. It is clear
that a (discrete) topological semilattice is chain-compact if (and only if) it is chain-finite.
In Theorem~\ref{t:main3} we shall prove that a Hausdorff topological semilattice is chain-compact if and only if it is {\em $k$-complete} in the sense that each non-empty chain $C\subset X$ has the
greatest lower bound $\inf C$ and the least upper bound $\sup C$,
which belong to the closure $\bar C$ of $C$ in $X$.

By $\TsL$ (resp. $\sTsL$) we denote the class of Hausdorff (semi)topological semilattices and by $\TsLone$ (resp. $\sTsLone$) the class of (semi)topological semilattices satisfying the separation axiom $T_1$ (= all finite subsets are closed). The class $\TsL$ is contained in the class $\TS$ of Hausdorff topological semigroups.


Next we introduce a (relatively small) class $\Ord$ of ordinal topological semilattices. Each ordinal $\alpha$ is identified with the set $\{\beta:\beta<\alpha\}$ of smaller ordinals and is endowed with the interval topology generated by the subbase consisting of the intervals $(\leftarrow,\beta):=\{x\in\alpha:x<\beta\}$ and $(\beta,\to):=\{x\in\alpha:x>\beta\}$ where $\beta\in\alpha$. Each ordinal is locally compact, zero-dimensional, and hereditarily normal, see \cite[2.7.5, 3.12.3]{Engelking1989}. Moreover, an ordinal is compact if and only if it is a successor ordinal. So, each limit ordinal $\lambda$ is a non-closed subspace of the ordinal $\lambda+1$.

Any ordinal  endowed with the operation of minimum or maximum is a topological semilattice, called an {\em ordinal semilattice}. The class of ordinal semilattices will be denoted by $\Ord$. It follows from \cite[3.12.3]{Engelking1989} that each ordinal semilattice is hereditarily normal.


A Hausdorff topological semilattice $X$ is called a {\em local $\w_{\max}$-semilattice} (resp. {\em local $\w_{\min}$-semilattice}) if each non-isolated point $x\in X$ is contained in an closed-and-open chain $C_x\subset X$, which is algebraically isomorphic to the ordinal $\w+1=[0,\w]$ endowed with the semilattice operation of maximum (resp. minimum). By $\TsLw$ we denote the class consisting of all local $\w_{\max}$-semilattices and all local $\w_{\min}$-semilattices. Each topological semilattice in the class $\TsLw$ is zero-dimensional, Hausdorff and locally compact. On the other hand, this class contains separable topological semilattices, which are not normal, see Example~\ref{ex:6.3}.

It is easy to see that $$\TsLw\cup\Ord\subset\TsL\subset \TsLone\subset \sTsLone.$$

In \cite{Stepp75} Stepp proved that a discrete topological
semilattice is chain-finite if and only if it is ${\mathsf h}{:}\mathsf{\!T\!sL}$-closed. In this paper we shall extend this characterization of Stepp finding many other closedness conditions,  equivalent to chain-finiteness (or chain-compactness) of topological semilattices. Among such conditions an important role is due to the $\vec\C$-closedness in the categories whose morphisms are $T_i$-multimorphisms bewteen topologized semilattices for $i\in\{1,2\}$.

To define $T_i$-multimorphisms between topologized semilattices we need to recall some information on $\theta$-closed sets in topological
spaces.

A subset $A$ of a topological space $X$ is called {\em
$\theta$-closed} in $X$ if each point $x\in X\setminus A$ has a
{\em closed} neighborhood $\bar O_x\subset X$ that does not
intersect $A$. It follows that each $\theta$-closed set is closed.
A subset $A$ of a regular topological space $X$ is closed if and
only if $A$ is $\theta$-closed in $X$. Moreover, a topological
space $X$ is regular if and only if each closed subset of $X$ is
$\theta$-closed. A topological space $X$ is Hausdorff if and only
if each singleton $\{x\}\subset X$ is $\theta$-closed in $X$.
$\theta$-Closed sets were introduced by Velichko \cite{Velichko} in 1966 and is an important concept in the theory of $H$-closed topological spaces \cite{Vel2}. Observe that a topological space $X$ satisfies the separation axiom $T_1$ (resp. $T_2$) if and only if each singleton $\{x\}\subset X$ is closed (resp. $\theta$-closed) in $X$. For a uniform presentation of our
results it will be convenient to call $\theta$-closed sets {\em
$T_2$-closed} and closed sets {\em $T_1$-closed}. 
Observe that a topological space $X$ satisfies the separation axiom $T_i$ for $i\in\{1,2\}$ if and only if each singleton $\{x\}\subset X$ is $T_i$-closed in $X$.

A multi-valued map $\Phi:X\multimap Y$ between topological spaces $X,Y$ is
called a
\begin{itemize}
\item {\em $T_i$-multimap} for $i\in\{1,2\}$ if for any $x\in
X$ the set $\Phi(x):=\{y\in X:(x,y)\in\Phi\}$ is $T_i$-closed in $Y$;
\item {\em upper semicontinuous} if for any closed set
$F\subset Y$ the preimage $\Phi^{-1}(F):=\{x\in X:\Phi(x)\cap F\ne\emptyset\}$ is closed in $X$.
\end{itemize}
A multimorphism $\Phi:X\multimap Y$ between topologized semigroups
is called a {\em $T_i$-multimorphism} for $i\in\{1,2\}$ if it is a $T_i$-multimap.

We recall that a multimap $\Phi:X\multimap Y$ between semigroups is called a {\em multimorphism} if $\Phi(x)\cdot \Phi(x')\subset \Phi(xx')$ for any $x,x'\in X$. Here $\Phi(x)\cdot\Phi(x')=\{yy':y\in\Phi(x),\;y'\in
\Phi(x')\}$ is the product of the sets $\Phi(x)$ and $\Phi(x')$ in
the semigroup $Y$.

Observe that each continuous homomorphism $h:X\to Y$ between
topologized semigroups is an upper semicontinuous multimorphism.
Moreover, if the space $Y$ is a $T_i$-space for $i\in\{1,2\}$, then $h$ is a $T_i$-multimorphism.

For $i\in\{1,2\}$ and a class $\C$ of topologized semigroups by $\mathsf{m}_{\mathsf{i}}{:}\C$ we denote the category whose objects are topologized semigroups in the class $\C$ and morphisms are upper semicontinuous $T_i$-multimorphisms between topologized semigroups in the class $\C$. If the class $\C$ consists of Hausdorff topologized semigroups, then $\mathsf{m}_{\!\mathsf{1}\!}{:}\C\supset \mathsf{m}_{\!\mathsf{2}}{:}\C\supset\pC\supset \hC\supset \eC$ and for every topologized semigroup we have the implications
$$\mbox{$\mathsf{m}_{\!\mathsf{1}\!}{:}\C$-closed $\Ra$
$\mathsf{m}_{\!\mathsf{2}}{:}\C$-closed $\Ra$
$\pC$-closed $\Ra$ $\hC$-closed $\Ra$ $\eC$-closed.}$$

The main results of the paper are drawn in Diagram 1 describing the relations between various
closedness properties of a Hausdorff topological
semilattice. In this diagram by $\C_1$ and $\C_2$ we denote any classes of topologized semilattices such that
$$\TsLone\subset\C_1\subset \sTsLone\mbox{ \ and \ }\Ord\cup\TsLw\subset \C_2\subset\TsL.$$
Curved arrows with inscription indicate the implications that hold under an additional assumption of the linearity or the discreteness of the semilattice.
{\small
$$\xymatrixcolsep{15pt}
\xymatrix{
\mbox{chain-finite}\ar@{<=>}[r]\ar@{=>}[d]&
\mbox{${\mathsf m}_{\!\mathsf 1\!}{:}\C_1$-closed}\ar@{<=>}[r]\ar@{=>}[dd]&
\mbox{$\mathsf{pe}{:}\C_1$-closed}\ar@{<=>}[r]\ar@{=>}[d]&
\mbox{$\mathsf{p}{:}\C_1$-closed}\ar@{=>}[r]\ar@{=>}[d]&
\mbox{$\mathsf h{:}\C_1$-closed}\ar@{=>}[r]\ar@{=>}[d]&
\mbox{$\mathsf e{:}\C_1$-closed}\ar@{=>}[d]
\ar@/_20pt/[ll]_{\mbox{\tiny{\it linear or discrete}}}
\\
\mbox{chain-compact}&
&\mbox{$\mathsf{pe}{:}\!\TS$-closed}\ar@{<=>}[r]\ar@{<=>}[d]
&\mbox{$\mathsf{p}{:}\!\TS$-closed}\ar@{=>}[r]\ar@{<=>}[d]&
\mbox{${\mathsf h}{:}\!\TS$-closed}\ar@{=>}[r]\ar@{=>}[d]&
\mbox{${\mathsf e}{:}\TS$-closed}\ar@{=>}[d] \ar@/^12pt/[l]^{\mbox{\tiny{\it linear}}}
\\
\mbox{$k$-complete}\ar@{<=>}[r]\ar@{<=>}[u]
&\mbox{${\mathsf m}_{\!\,\mathsf 2\!\,}{:}\C_2$-closed}\ar@{<=>}[r]&
\mbox{$\mathsf{pe}{:}\C_2$-closed}\ar@{<=>}[r]&
\mbox{$\mathsf{p}{:}\C_2$-closed}\ar@{=>}[r]&
\mbox{$\hC_2$-closed}\ar@{=>}[r]&
\mbox{$\eC_2$-closed.}\ar@{=>}[d]\\
&&\mbox{$\mathsf{pe}{:}\!\Ord$-closed}\ar@{<=>}[u]
&&&
\mbox{$\mathsf e{:}\!\TsLw$-closed.}\ar@/_35pt/[uuu]
|-{\hskip-5pt\mbox{\tiny{\it discrete}}}
}
$$
}
\centerline{Diagram 1}
\smallskip

The equivalence of the $\mathsf{e}{:}\!\TS$-closedness and $\mathsf{h}{:}\!\TS$-closedness for linear Hausdorff topological semilattices (in the second row of Diagram 1) was proved by Gutik and Repov\v s in \cite{GutikRepovs2008}.

\section{Closedness of $k$-complete topologized semilattices}\label{s:main}

A topologized semilattice $X$ is defined to be {\em $k$-complete} if each non-empty chain $C\subset X$ has the
greatest lower bound $\inf C$ and the least upper bound $\sup C$,
which belong to the closure $\bar C$ of $C$ in $X$. More information on $k$-complete topological semilattices can be found in \cite{BBc}.

A (topological) semigroup $S$ is called a ({\em topological}) {\em band} if each element $x\in S$ is an idempotent (i.e., $xx=x$).

\begin{theorem}\label{t:main1} Each upper semicontinuous $T_2$-multimorphism $\Phi:X\multimap Y$ from a  $k$-complete topo\-lo\-gized semilattice to a topological band $Y$ has $T_2$-closed image $\Phi(X)$ in $Y$.
\end{theorem}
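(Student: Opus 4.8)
The plan is to recast the conclusion ``$\Phi(X)$ is $T_2$-closed'' as ``$\cl_\theta\Phi(X)\subseteq\Phi(X)$'', since $T_2$-closed means $\theta$-closed. So first I would fix a point $y$ in the $\theta$-closure of $\Phi(X)$ and try to produce a single $x_*\in X$ with $y\in\Phi(x_*)$. For each closed neighborhood $\bar O$ of $y$ set $F_{\bar O}:=\Phi^{-1}(\bar O)$. Upper semicontinuity makes each $F_{\bar O}$ closed, and $y\in\cl_\theta\Phi(X)$ makes each $F_{\bar O}$ nonempty; since a finite intersection of closed neighborhoods is again a closed neighborhood and $F_{\bar O_1\cap\bar O_2}\subseteq F_{\bar O_1}\cap F_{\bar O_2}$, the family $\{F_{\bar O}\}$ is a filter base of nonempty closed subsets of $X$. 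The key elementary observation is that, because each value $\Phi(x)$ is itself $T_2$-closed, we have $x\in\bigcap_{\bar O}F_{\bar O}$ iff every closed neighborhood of $y$ meets $\Phi(x)$, i.e. iff $y\in\Phi(x)$. Thus the whole theorem reduces to showing $\bigcap_{\bar O}F_{\bar O}\ne\emptyset$.

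To extract a point of this intersection from a mere filter base (without compactness of $X$) I would exploit $k$-completeness through chains. The target is a nonempty decreasing chain $C\subseteq X$ that dips \emph{coinitially} into every $F_{\bar O}$, meaning that for each closed neighborhood $\bar O$ and each $c\in C$ there is $c'\in C\cap F_{\bar O}$ with $c'\le c$. Granting such a chain, $k$-completeness gives $x_*:=\inf C\in\bar C$; coinitiality forces $\inf(C\cap F_{\bar O})=\inf C=x_*$, and applying $k$-completeness to the nonempty chain $C\cap F_{\bar O}$ places $x_*$ in its closure, hence in the closed set $F_{\bar O}$. As this holds for every $\bar O$, we get $x_*\in\bigcap_{\bar O}F_{\bar O}$, so $y\in\Phi(x_*)\subseteq\Phi(X)$, as desired.

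The engine for building $C$ is the meet of $X$ together with the idempotency furnished by the band hypothesis. If $a\in F_{\bar O_1}$ and $a'\in F_{\bar O_2}$, pick $y_1\in\Phi(a)\cap\bar O_1$ and $y_2\in\Phi(a')\cap\bar O_2$; then $aa'\le a$ and $aa'\le a'$, while the multimorphism property gives $y_1y_2\in\Phi(a)\cdot\Phi(a')\subseteq\Phi(aa')$, so $\Phi(aa')$ meets $\bar O_1\cdot\bar O_2$. Here the band structure is essential: $y=yy$, so by continuity of the multiplication of $Y$ at $(y,y)$ the product $y_1y_2$ can be forced into any prescribed closed neighborhood of $y$ provided $y_1,y_2$ are chosen close enough to $y$. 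Consequently, meets of ``$y$-approximations'' remain ``$y$-approximations'', and I would assemble these meets into a single decreasing chain by a Zorn/transfinite recursion indexed by the directed family of closed neighborhoods of $y$, arranging at successor steps that the running meet lands in the next, smaller $F_{\bar O}$ while staying below all earlier elements.

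The hard part will be precisely this assembly step: turning the finite intersection property of $\{F_{\bar O}\}$ into a genuinely totally ordered chain that is coinitial in \emph{all} $F_{\bar O}$ simultaneously. The subtlety is that $Y$ need not be regular, so closed neighborhoods of $y$ need not form a shrinking base, and one cannot naively replace an open ``square-root'' neighborhood by a closed one; the idempotency $y=yy$ and continuity of the multiplication at $(y,y)$ are exactly what let the product-control go through despite this. Managing the transfinite bookkeeping (keeping comparability along the recursion, and handling limit stages via $k$-complete infima while preserving that every $F_{\bar O}$ is still met coinitially below the current stage) is where the real work lies; everything before and after it is the routine reduction and the clean $\inf$-extraction described above.
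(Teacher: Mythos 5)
Your opening reduction is exactly the paper's: assume $y$ lies in the $\theta$-closure of $\Phi(X)$ but not in $\Phi(X)$, note that each $F_{\bar O}=\Phi^{-1}(\bar O)$ is closed and non-empty, and observe that a point of $\bigcap_{\bar O}F_{\bar O}$ would have $y\in\Phi(x)$ because $\Phi(x)$ is $T_2$-closed; your use of $y=yy$ and joint continuity to pass from open square-root neighborhoods to closed ones (via $\bar V\bar V\subset\overline{VV}\subset\overline{W}=W$) is also the right observation. The gap is that the entire content of the theorem sits in the step you defer: producing a single decreasing chain $C$ coinitial in \emph{every} $F_{\bar O}$. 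As sketched, this construction runs into a concrete obstruction once uncountably many neighborhoods are in play. Each descent step multiplies the current element by one new factor $c'$ with $\Phi(c')$ meeting some neighborhood $V$; for the product to re-enter $F_{\bar O_\alpha}$ you need $V$ to sit inside a suitable ``square root'' $U_{\alpha,n}$ of $\bar O_\alpha$, and to keep the chain coinitial in all the $F_{\bar O_\alpha}$ handled so far you would need $V\subset\bigcap_\alpha U_{\alpha,n_\alpha}$ --- an intersection of uncountably many neighborhoods of $y$, which need not be a neighborhood. So a single transfinite descending recursion cannot simultaneously maintain coinitiality in uncountably many $F_{\bar O}$'s, and the ``Zorn/transfinite bookkeeping'' you invoke does not exist in the form you describe.

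The paper avoids this by never building one global chain. It proves, by induction on the cardinal $\kappa$ of the family of closed neighborhoods, that $\bigcap_{\alpha<\kappa}\Phi^{-1}(U_\alpha)\ne\emptyset$. The countable case is an isolated lemma: from a squaring sequence $U_{n+1}U_{n+1}\subset U_n$ one gets $A_{n+1}A_{n+1}\subset A_n$ for the preimages, then for chosen $x_n\in A_{n+1}$ the products $x_n\cdots x_m$ decrease, their infima $\bar x_n\in A_n$ form an \emph{increasing} chain, and the supremum of that chain lies in every $A_n$. At the uncountable stage, each $U_\alpha$ is refined to a squaring sequence whose preimage-intersection is a non-empty closed \emph{subsemilattice} $L_\alpha$; the inductive hypothesis makes each $L_{<\beta}=\bigcap_{\alpha<\beta}L_\alpha$ non-empty, a maximal chain plus $k$-completeness yields a smallest element $x_\beta$ of $L_{<\beta}$, these smallest elements form an increasing transfinite chain, and its supremum lands in all the $L_{<\beta}$. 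Note that the decisive chains in the paper are increasing and handled by suprema of least elements of nested closed subsemilattices --- a structure entirely absent from your decreasing-chain plan. To repair your proposal you would essentially have to reconstruct this cardinality induction and the semilattice-with-least-element mechanism; the reduction and the $\inf$/closure extraction you do supply are the routine parts.
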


\begin{proof} Assuming
that $\Phi(X)$ is not $T_2$-closed  in $Y$, we could find a point
$y\in Y\setminus \Phi(X)$ such that each closed neighborhood of $y$
in $Y$ meets the set $\Phi(X)$.

\begin{lemma}\label{l:w} For any decreasing sequence
$(U_n)_{n\in\w}$ of closed neighborhoods of $y$ with
$U_{n+1}U_{n+1}\subset U_n$, the set
$\bigcap_{n\in\w}\Phi^{-1}(U_n)$ is a non-empty closed
subsemilattice of $X$.
\end{lemma}

\begin{proof} For every $n\in\w$ the intersection $U_n\cap \Phi(X)$
is not empty, which implies that the preimage
$A_n=\Phi^{-1}(U_n):=\{x\in X:\Phi(x)\cap U_n\ne\emptyset\}$ is not
empty and closed in $X$ (because of the upper semicontinuity of $\Phi$). Next we show that $U_{n+1}U_{n+1}\subset U_n$
implies $A_{n+1}A_{n+1}\subset A_n$. Take any points $x,x'\in
A_{n+1}$ and find points $y\in\Phi(x)\cap U_{n+1}$ and
$y'\in\Phi(x')\cap U_{n+1}$. Taking into account that $\Phi$ is a
multimorphism, we conclude that $yy'\in \Phi(x)\cdot
\Phi(x')\subset\Phi(xx')$. Since $yy'\in U_{n+1}U_{n+1}\subset
U_n$, the point $xx'$ belongs to $\Phi^{-1}(yy')\subset
\Phi^{-1}(U_n)=A_n$. So, $A_{n+1}A_{n+1}\subset A_n$ for all
$n\in\w$. This inclusions imply that the intersection
$L=\bigcap_{n\in\w}A_n$ is a closed subsemilattice of $X$.

It remains to prove that this subsemilattice $L$ is not empty.
For every $n\in\w$ choose a point
$x_n\in A_{n+1}$. For any numbers $n<m$ consider the product
$x_n\cdots x_m\in X$. Using the inclusions $A_kA_k\subset A_{k-1}$
for $k\in\{m,m-1,\dots,n+1\}$, we can show that $x_n\cdots x_m\in
A_n$. Observe that for every $n\in\w$ the sequence $(x_{n}\cdots
x_m)_{m>n}$ is decreasing and by the $k$-completeness of $X$,
this chain has the greatest lower bound $\bar
x_n:=\inf_{m>n}x_n\cdots x_m$ in $X$, which belongs to
$\bar A_n=A_n$.

Taking into account that $x_n\cdots x_m\le x_k\cdots x_m$ for any
$n\le k<m$, we conclude that $\bar x_n\le \bar x_k$ for any $n\le
k$. By the $k$-completeness of $X$ the chain $\{\bar
x_n\}_{n\in\w}$ has the least upper bound $\bar x=\sup\{\bar
x_n\}_{n\in\w}$ in $X$. Since for every $k\in\w$ the chain $\{\bar
x_n\}_{n>k}$ is contained in $A_k$, the least upper
bound $\bar x=\sup\{\bar x_n\}_{n\in\w}=\sup\{\bar x_n\}_{n\ge k}$
belongs to $\bar A_k=A_k$ for all $k\in\w$. Then
$\bar x\in\bigcap_{n\in\w}A_n=L$, so
$L\ne\emptyset$.
\end{proof}
 Now we return back to the proof of Theorem~\ref{t:main1}. By
transfinite
induction, for every non-zero cardinal $\kappa$ we shall prove the
following statement:
\begin{itemize}
\item[$(*_\kappa)$] {\em for every family
$\{U_{\alpha}\}_{\alpha\in\kappa}$ of
neighborhoods of $y$ in $Y$, the set
$\bigcap_{\alpha\in\kappa}\Phi^{-1}(U_{\alpha})$ is not
empty.}
\end{itemize}
\smallskip

To prove this statement for the smallest infinite cardinal
$\kappa=\w$, fix an arbitrary sequence
$(U_{n})_{n\in\w}$ of closed
neighborhoods of $y$. Using the continuity of the semigroup
operation at $y=yy$, construct a decreasing sequence
$(W_n)_{n\in\w}$ of closed neighborhoods of $y$ such that
$W_n\subset \bigcap_{k\le n}U_{k}$ and
$W_{n+1}W_{n+1}\subset W_n$ for all $n\in\w$. By Lemma~\ref{l:w},
the intersection $\bigcap_{n\in\w}\Phi^{-1}(W_n)$ is not empty and
so is the (larger) intersection $\bigcap_{n\in\w}\Phi^{-1}(U_n)$.

Now assume that for some infinite cardinal $\kappa$ and all
cardinals $\lambda<\kappa$ the statement $(*_\lambda)$ has been
proved. To prove the statement $(*_\kappa)$, fix any family
$(U_{\alpha})_{\alpha\in\kappa}$ of closed
neighborhoods of $y$ in the topological semilattice $Y$.

For every $\alpha\in\kappa$, using the continuity of the
semilattice operation at $y=yy$ choose a decreasing sequence
$(U_{\alpha,n})_{n\in\w}$ of closed neighborhoods of $y$ such that
$U_{\alpha,0}\subset U_\alpha$ and $U_{\alpha,n+1}^2\subset
U_{\alpha,n}$ for all $n\in\w$. By Lemma~\ref{l:w}, the
intersection $L_\alpha=\bigcap_{n\in\w}\Phi^{-1}(U_{\alpha,n})$ is
a non-empty closed subsemilattice in $X$.

Then for every $\beta\in\kappa$ the intersection
$$L_{<\beta}=\bigcap_{\alpha<\beta}L_\alpha=
\bigcap_{\alpha<\beta}\bigcap_{n\in\w}\Phi^{-1}(U_{\alpha,n})$$ is
a closed subsemilattice of $X$. By the inductive assumption
$(*_{|\beta\times\w|})$, the semilattice $L_{<\beta}$ is not
empty.

Choose any maximal chain $M$ in $L_{<\beta}$. The
$k$-completeness of $X$ guarantees that $M$ has $\inf M\in \bar
M\subset L_{<\beta}$. We claim that
$x_\alpha:=\inf M$ is the smallest element of the semilattice
$L_{<\alpha}$. In the opposite case, we could find an element
$z\in
L_{<\alpha}$ such that $x_\alpha\not\le z$ and hence $x_\alpha
z<x_\alpha$. Then $\{x_\alpha z\}\cup M$ is a chain in
$L_{<\alpha}$ that properly contains the maximal chain $M$, which
is not possible. This contradiction shows that $x_\alpha=\inf M$
is
the smallest element of the semilattice $L_{<\alpha}$.

Observe that for any ordinals $\alpha<\beta<\kappa$ the inclusion
$L_{<\beta}\subset L_{<\alpha}$ implies $x_\alpha\le x_\beta$. So,
$\{x_\beta\}_{\beta\in\kappa}$ is a chain in $X$ and by the
$k$-completeness of $X$, it has
$\sup\{x_\beta\}_{\beta\in\kappa}\in
\bigcap_{\beta<\kappa}L_{<\beta}=\bigcap_{\alpha\in\kappa}
\bigcap_{n\in\w}\Phi^{-1}(U_{\alpha,n})\subset
\bigcap_{\alpha\in\kappa}\Phi^{-1}(U_\alpha)$. So, the latter set
is not empty and the
statement $(*_\kappa)$ is proved.
\smallskip

Since $\Phi$ is a $T_2$-multimap, for every $x\in X$ the set
$\Phi(x)$ is $T_2$-closed in $Y$. Since $y\notin \Phi(x)$, there
exists a closed neighborhood $U_x\subset Y$ of $y$ such that
$U_x\cap\Phi(x)=\emptyset$. Then the set $\bigcap_{x\in
X}\Phi^{-1}(U_{x})$ is empty, which contradicts the property
$(*_\kappa)$ for $\kappa=|X|$. This contradiction completes the
proof of the $T_2$-closedness of $\Phi(X)$ in $Y$.
\end{proof}

We recall that $\TsLone$ denotes the class of topological semilattices satisfying the separation axiom $T_1$ and $\mathsf{T\!S}$ is the class of Hausdorff topological semigroups. 
Theorem~\ref{t:main1} imply the following corollary.

\begin{corollary}\label{c:main1} Each $k$-complete topologized semilattice $X$ is $\mathsf{m}_{\!\,\mathsf{2}\!}{:}\!\TsLone$-closed and $\mathsf{p}{:}\!\mathsf{T\!S}$-closed.
\end{corollary}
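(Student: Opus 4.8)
The plan is to deduce both assertions from Theorem~\ref{t:main1}, whose hypotheses coincide almost exactly with the data we are given; the only work will be in arranging that the codomain is a topological band, which is automatic in the first case but requires a small maneuver in the second.

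For the $\mathsf{m}_{\!\,\mathsf{2}\!}{:}\!\TsLone$-closedness I would observe that the hypotheses of Theorem~\ref{t:main1} are met for free. Given any morphism $\Phi:X\multimap Y$ of the category $\mathsf{m}_{\!\,\mathsf{2}\!}{:}\!\TsLone$, the object $Y$ is a $T_1$-topological semilattice and $\Phi$ is an upper semicontinuous $T_2$-multimorphism. As every element of a semilattice is idempotent, $Y$ is a topological band, so Theorem~\ref{t:main1} applies verbatim and yields that $\Phi(X)$ is $T_2$-closed, and hence closed, in $Y$. This half is immediate.

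For the $\mathsf{p}{:}\!\TS$-closedness I must show that every closed subsemigroup $Z\subset X$ is $\mathsf{h}{:}\!\TS$-closed. First I would check that $Z$ inherits $k$-completeness: for a non-empty chain $C\subset Z$ the bounds $\inf C$ and $\sup C$ formed in $X$ lie in $\bar C\subset Z$ since $Z$ is closed, and because the order of $Z$ is the restriction of that of $X$ they serve as the bounds of $C$ in $Z$ and belong to the closure of $C$ in $Z$; thus $Z$ is $k$-complete. Now fix a continuous homomorphism $f:Z\to Y$ into a Hausdorff topological semigroup $Y$.

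The main obstacle is that $Y$ itself need not be a band, so Theorem~\ref{t:main1} cannot be applied to $f:Z\to Y$ as given. My remedy is to pass to $Y':=\overline{f(Z)}$. Since $f(Z)$ is the homomorphic image of the band $Z$, it is a subsemigroup all of whose elements are idempotent; its closure $Y'$ is again a subsemigroup, and as the continuous maps $y\mapsto y$ and $y\mapsto yy$ agree on the dense set $f(Z)$, they agree on $Y'$ by Hausdorffness, so $Y'$ is a Hausdorff topological band. Viewing $f$ as a multimorphism $Z\multimap Y'$, it is upper semicontinuous because $f$ is continuous and it is a $T_2$-multimorphism because singletons are $\theta$-closed in the Hausdorff space $Y'$. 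Theorem~\ref{t:main1} then shows $f(Z)$ to be $T_2$-closed, and so closed, in $Y'$; being also dense in $Y'$, it must equal $Y'$, whence $f(Z)$ is closed in $Y$. Therefore $Z$ is $\mathsf{h}{:}\!\TS$-closed, and $X$ is $\mathsf{p}{:}\!\TS$-closed.
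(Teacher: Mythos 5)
Your proof is correct and follows essentially the same route as the paper: both parts are reduced to Theorem~\ref{t:main1} after replacing $Y$ by the closure $\overline{f(Z)}$, which is a topological band (indeed a topological semilattice). The only cosmetic difference is in the second part, where you verify that a closed subsemilattice $Z$ of a $k$-complete semilattice is itself $k$-complete and apply Theorem~\ref{t:main1} to $f:Z\to\overline{f(Z)}$ directly, whereas the paper instead extends $f$ to an upper semicontinuous $T_2$-multimorphism on all of $X$ with empty values outside $Z$; both devices work.
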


\begin{proof} Theorem~\ref{t:main1} and  the definition of a $\mathsf{m}_{\!\,\mathsf{2}\!}{:}\C$-closed topologized semigroup imply that $X$ is $\mathsf{m}_{\!\,\mathsf{2}\!}{:}\!\TsLone$-closed.

To prove that $X$ is $\mathsf{p}{:}\!\mathsf{T\!S}$-closed, take any closed subsemilattice $Z\subset X$ and any continuous homomorphism $h:Z\to Y$ to a Hausdorff topological semigroup $Y$.
It follows that $h(Z)$ is a subsemilattice of $Y$ and so is its closure $\overline{h(Z)}$ in $Y$. Replacing $Y$ by $\overline{h(Z)}$, we can assume that $Y=\overline{h(Z)}$ is a topological semilattice.

 It follows that the multimap $\Phi:X\multimap Y$ defined by
$$
\Phi(x)=\begin{cases}
\{h(x)\},&\mbox{$x\in Z$},\\
\emptyset,&\mbox{$x\notin Z$},
\end{cases}
$$is a multimorphism. The continuity of $h$ guarantees that for any closed subset $F\subset Y$ the preimage $\Phi^{-1}(F)=h^{-1}(F)$ is closed in $Z$ and hence in $X$, too. This means that the multimap $\Phi$ is upper semicontinuous. Since the space $Y$ is Hausdorff, each singleton in $Y$ is $T_2$-closed, so $\Phi$ is a $T_2$-multimap. Since the topological semilattice $X$ is ${\mathsf m}_{\!\mathsf{2}}{:}\!\TsLone$-multiclosed, the image $\Phi(X)=h(Z)$ is closed in $Y=\overline{h(Z)}$.
\end{proof}

\section{Characterizing chain-compact topological semilattices}

In this section we shall apply Theorem~\ref{t:main1} to obtain a characterization of chain-compact Hausdorff topological semilattices. We recall that a topological semilattice $X$ is {\em chain-compact} if each closed chain in $X$ is compact. 

By $\Ord$ we denote the class of ordinal topological semilattices (i.e., ordinals endowed with the interval topology and the semilattice operation of minimum or maximum). Each cardinal $\kappa$ is identified with the smallest ordinal of cardinality $\kappa$. A cardinal $\kappa$ is {\em regular} if each subset $B\subset \kappa$ of cardinality $|B|<\kappa$ is upper bounded in $\kappa$.

\begin{theorem}\label{t:main3} Let $\C$ be a class of topological semigroups such that $\Ord\subset\C\subset\mathsf{T\!S}$. For a Hausdorff semitopological semilattice $X$ the following statements are equivalent:
\begin{enumerate}
\item $X$ is chain-compact;
\item each maximal chain in $X$ is compact;
\item $X$ is $k$-complete;
\smallskip
\item $X$ is $\mathsf{m}_{\mathsf{2}}{:}\!\TsLone$-closed;
\item $X$ is $\pC$-closed;
\item $X$ is $\peC$-closed;
\item any closed chain in $X$ is $\eC$-closed;
\item no closed chain in $X$ is topologically isomorphic to an infinite regular cardinal endowed with the semilattice operation of minimum or maximum.
\end{enumerate}
\end{theorem}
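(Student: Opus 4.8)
The plan is to prove the cycle of implications establishing equivalence of all eight statements, organizing them into a loop so that each follows from the previous. Most of the heavy machinery is already available: Theorem~\ref{t:main1} and Corollary~\ref{c:main1} hand us the implications from $k$-completeness to the various closedness properties essentially for free, so the real work concentrates in the ``topological'' equivalences (1)$\Leftrightarrow$(2)$\Leftrightarrow$(3) and in closing the loop via the negative criterion (8).

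First I would dispatch the purely order-theoretic equivalences. The implication (1)$\Rightarrow$(2) is immediate since a maximal chain is automatically closed (its closure is again a chain by continuity of the semilattice operation, hence equals it by maximality). For (2)$\Rightarrow$(3), I would take a non-empty chain $C$, extend it to a maximal chain $M$, and use compactness of $M$ to produce $\inf C$ and $\sup C$ inside $\bar C$: a decreasing net of finite infima has a cluster point in the compact set $\bar M$, and continuity of the operation forces this cluster point to be the greatest lower bound lying in $\bar C$. The reverse direction (3)$\Rightarrow$(1), showing $k$-completeness forces closed chains to be compact, is the step I expect to be the main obstacle, because compactness is a global covering condition whereas $k$-completeness only guarantees existence of suprema and infima of subchains. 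The standard route is to show a $k$-complete closed chain $C$ is a complete lattice in its order and that the order topology agrees with (or is coarser than) the subspace topology; then compactness follows from completeness of the order via the Alexander subbase lemma applied to the interval subbasis, noting that every open cover by order-intervals admits a finite subcover precisely because every bounded monotone net converges.

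Next I would run the closedness implications. By Corollary~\ref{c:main1}, (3) gives both $\mathsf{m}_{\mathsf 2}{:}\!\TsLone$-closedness (statement (4)) and $\mathsf p{:}\!\TS$-closedness; and since $\Ord\cup\TsLw\subset\C\subset\TS$ the latter yields $\pC$-closedness (statement (5)). The chain (5)$\Rightarrow$(6)$\Rightarrow$(7) follows from the general implications displayed before the diagram: $\pC$-closed $\Rightarrow$ $\peC$-closed, and $\peC$-closedness of $X$ restricts to $\eC$-closedness of each closed subsemilattice, in particular of each closed chain. Likewise (4)$\Rightarrow$(5) uses $\mathsf{m}_{\mathsf 2}{:}\C\supset\pC$ for the Hausdorff class, so $\mathsf{m}_{\mathsf 2}$-closed implies $\mathsf p$-closed.

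Finally I would close the loop by proving (7)$\Rightarrow$(8)$\Rightarrow$(1). For (7)$\Rightarrow$(8), I argue contrapositively: if some closed chain $C\subset X$ were topologically isomorphic to an infinite regular cardinal $\kappa$ (say with the $\min$ operation, so $\kappa$ is an order-initial segment), then $\kappa$ embeds as a non-closed subspace of $\kappa+1\in\Ord\subset\C$ via the identity embedding, which is an isomorphic topological embedding with non-closed image; this witnesses failure of $\eC$-closedness of the closed chain $C$, contradicting (7). The key point here is that $\kappa$ is never closed in $\kappa+1$ because $\kappa$ itself is a limit ordinal whose supremum $\kappa$ is the adjoined top point. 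For (8)$\Rightarrow$(1), I again use contraposition: a closed chain $C$ that is not compact must, by the order-topology analysis from (3)$\Rightarrow$(1), contain a strictly monotone transfinite sequence of regular length with no limit point in $C$; refining this to a cofinal (or coinitial) subchain of minimal regular cardinality $\kappa$ produces a closed subchain topologically isomorphic to $\kappa$ with the $\max$ (or $\min$) operation, contradicting (8). The regularity of $\kappa$ is what guarantees the subchain is closed and carries exactly the interval topology of the cardinal, and isolating this combinatorial extraction is the delicate part of the argument.
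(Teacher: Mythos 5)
Your proposal follows essentially the same route as the paper: the same cycle $(1)\Ra(2)\Ra(3)\Ra(1)$, the appeal to Corollary~\ref{c:main1} for $(3)\Ra(4)\Ra(5)$, the trivial implications $(5)\Ra(6)\Ra(7)$, the contrapositive of $(7)\Ra(8)$ via the non-closedness of $\kappa$ in $\kappa+1\in\Ord$, and the transfinite extraction of a closed copy of a regular cardinal to close the loop (the paper returns to (3) rather than (1), which is equivalent given $(1)\Leftrightarrow(3)$). The one caveat is the parenthetical in your $(3)\Ra(1)$ step: for compactness to transfer you need the subspace topology to be \emph{contained in} the compact interval topology (the paper's four-case analysis establishes precisely this inclusion, the reverse inclusion being the easy consequence of the closedness of the cones ${\uparrow}x$ and ${\downarrow}x$), so ``order topology coarser than subspace topology'' points the wrong way, though your primary claim that the two topologies agree is what is actually proved.
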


\begin{proof} First we prove the implications $(1)\Ra(2)\Ra(3)\Ra(1)$.
\smallskip

$(1)\Ra(2)$ Assume that $X$ is chain-compact and take a maximal chain $M\subset X$. The Hausdorff property of $X$ implies that the closure $\bar M$ of $M$ is a chain, equal to $M$ by the maximality of $M$. By the chain-compactness of $X$, the maximal chain $M=\bar M$ in $X$ is compact.
\smallskip

$(2)\Ra(3)$ Assume that each maximal chain in $X$ is compact. To prove that the topological semilattice $X$ is $\kappa$-complete, take any non-empty chain $C\subset X$. Using Zorn's Lemma, enlarge $C$ to a maximal chain $M\subset X$. By our assumption,  $M$ is a compact Hausdorff semilattice and so is its closed subsemilattice $\bar C$. By the compactness, the closed subchain $\overline{C}$ of $M$ has the smallest element $\inf\overline C$ (otherwise the sets $U_a=\{x\in\bar C:x>a\}$ would form an open cover $\{U_a:a\in\bar C\}$ of $\bar C$ without finite subcover). To see that $\inf \overline{C}=\inf C$, take any lower bound $b\in X$ for $C$ and observe that $C\subset{\uparrow}b$. Consequently,  $\inf\overline{C}\in\overline{C}\subset\overline{{\uparrow}b}={\uparrow}b$ and finally $b\le\inf \overline{C}$. So, $\inf\overline C$ is the largest lower bound for $C$ and hence $\inf C=\inf\overline C\in\overline C$. By analogy we can prove that $\sup C=\sup\overline C\in\overline C$. So, the semitopological semilattice $X$ is $k$-complete.
\smallskip

$(3)\Ra(1)$ Assume that the semitopological semilattice $X$ is
$k$-complete. We need to show that each closed chain $K\subset X$ is compact.
Denote by $\tau$ the interval topology on $K$ generated by the subbase consisting of the sets $K\setminus{\uparrow}x$ and $K\setminus {\downarrow}x$ where $x\in K$. The $k$-completeness of $X$ guarantees that each subset $C\subset K$ has $\inf C\in \overline C\subset \overline K=K$ and $\sup C\in\overline C\subset K$. By \cite[3.12.3]{Engelking1989}, the linearly ordered topological space $(K,\tau)$ is compact and Hausdorff.

It remains to prove that the topology $\tau$ coincides with the subspace topology on $K$, inherited from $X$. The closedness of upper and lower sets ${\uparrow}x$, ${\downarrow}x$ in $X$ implies that the identity map $K\to(K,\tau)$ is continuous. To prove that the identity map $(K,\tau)\to K$ is continuous, take any point $x\in K$ and any open neighborhood $U_x\subset K$ of $x$. We need to find an open set $V\in\tau$ such that $x\in V\subset U_x$.
For this, consider two closed subsets $F_{\uparrow}:=K\cap{\uparrow}x\setminus U_x$ and $F_{\downarrow}:=K\cap{\downarrow}x\setminus U_x$ of $K$.


Four cases are possible.

i) If both sets $F_{\downarrow}$ and $F_{\uparrow}$ are empty, then we put $V:=K$ and observe that $x\in V=K\subset U_x$.

ii) If both sets $F_{\downarrow}$ and $F_{\uparrow}$ are not empty, then by the $k$-completeness of $X$ there exist $a:=\sup F_{\downarrow}\in\overline F_{\downarrow}=F_{\downarrow}=K\cap{\downarrow}x\setminus U_x$ and
$b:=\inf F_{\uparrow}\in \overline F_{\uparrow}=F_{\uparrow}=K\cap{\uparrow}x\setminus U_x$. It follows that $a<x<b$ and hence the order interval $V=K\setminus({\downarrow}a\cup{\uparrow}b)$ is the required open set in $(K,\tau)$ such that
$$K\setminus U_x=(K\cap{\uparrow}x\setminus U_x)\cup(K\cap{\downarrow}x\setminus U_x)\subset (K\cap{\uparrow}b)\cup(K\cap{\downarrow}a)=K\setminus V\subset K\setminus\{x\}$$ and hence $x\in V\subset U_x$.

iii) If $F_{\downarrow}$ is empty and $F_{\uparrow}$ is not empty, then by the $k$-completeness of $X$, the chain $F_{\uparrow}$ has $\inf F_{\uparrow}\in\overline F_{\uparrow}=F_{\uparrow}$. Then the open set $V=K\setminus {\uparrow}\inf F_{\uparrow}\in\tau$ is a required open set with $x\in V\subset U_x$.

iv) If $F_{\downarrow}$ is not empty and $F_{\uparrow}$ is empty, then by the $k$-completeness of $X$, the chain $F_{\downarrow}$ has $\sup F_{\downarrow}\in\overline F_{\downarrow}=F_{\downarrow}$. Then the open set $V=K\setminus {\downarrow}\sup F_{\downarrow}\in\tau$ is a required open set with $x\in V\subset U_x$.

This completes the proof of the continuity of the identity map $(K,\tau)\to K$. Now the compactness of the topology $\tau$ implies the compactness of the subspace topology on the maximal chain $K$.
\medskip

Next, we prove the implications  $(3)\Ra(4)\Ra(5)\Ra(6)\Ra(7)\Ra(8)\Ra(3)$. The implications $(3)\Ra(4)\Ra(5)$ were proved in (the proof of) Corollary~\ref{c:main1} and $(5)\Ra(6)\Ra(7)$ are trivial.
\smallskip

$(7)\Ra(8)$ Assume that some closed chain $C\subset X$ is topologically isomorphic to an infinite regular cardinal $\kappa$ endowed with the operation of minimum or maximum. Being a limit ordinal, the cardinal $\kappa=[0,\kappa)$ is a non-closed subset of the ordinal $\kappa+1=[0,\kappa]\in\Ord\subset\C$. Fix any topological isomorphism $h:C\to\kappa\subset\kappa+1$ and observe that its image $h(C)$ is not closed in $\kappa+1$, witnessing that the topological semilattice $C$ is not $\eC$-closed.
\smallskip

$(8)\Ra(3)$ Assuming that the topological semilattice $X$ is not $\kappa$-complete, we can find a chain $C\subset X$ such that $\inf C\notin\bar C$ or $\sup C\notin\bar C$. We can assume that $C$ is of the smallest possible cardinality, which means that each chain $K\subset X$ of cardinality $|K|<|C|$ has $\inf K\in\bar K$ and $\sup K\in \bar K$. It is clear that the cardinal $\kappa:=|C|$ is infinite.

First we consider the case of $\sup C\notin\bar C$. Let $\{z_\alpha\}_{\alpha<\kappa}$ be an enumeration of the set $C$ by ordinals $\alpha<\kappa$ where $\kappa=|C|$.

By transfinite induction we shall construct an increasing transfinite sequence $(x_\alpha)_{\alpha<\kappa}$ of points of $\bar C$ such that
\begin{enumerate}
\item[(i)] $x_{\alpha+1}\in C$ and $x_{\alpha+1}>\max\{x_\alpha,z_\alpha\}$ for any $\alpha<\kappa$;
\item[(ii)] $x_\alpha=\sup\{x_\beta:\beta<\alpha\}$ for any limit ordinal $\alpha<\kappa$.
\end{enumerate}
We start the inductive construction letting $x_0=z_0$. Assume that for some ordinal $\alpha<\kappa$ an increasing sequence $\{x_\beta\}_{\beta<\alpha}\subset \bar C$ has been constructed. If $\alpha$ is a limit ordinal, then put $x_\alpha:=\sup\{x_\beta\}_{\beta<\alpha}$ (the supremum exists by the minimality of the cardinal $\kappa=|C|>|\alpha|$). Next, assume that $\alpha$ is not limit and hence $\alpha=\beta+1$ for some ordinal $\beta$. Consider the element $z=\max\{z_\beta,x_\beta\}\in\bar C$ and observe that $z$ is not a maximal element of $\bar C$ (otherwise $\sup C=\max\bar C=z\in\bar C$, which contradicts the choice of $C$). Then the open subset $U=\{x\in\bar C:x>z\}$ is not empty and by the density of $C$ in $\bar C$, there exists a point $x_\alpha\in U\cap C$ satisfying the condition (i). This completes the inductive step.

After completing the inductive construction, consider the chain $L=\{x_\alpha\}_{\alpha<\kappa}$ in $X$. Observe that for every $\alpha<\kappa$ the subchain $L_\alpha:=\{x_\beta\}_{\beta\le\alpha}$ is closed-and-open in $L$ as $L_\alpha=L\cap{\downarrow}x_\alpha=L\setminus{\uparrow}x_{\alpha+1}$. The conditions (i),(ii) and the minimality of the cardinal $\kappa>|L_\alpha|$ guarantee that the chain $L_\alpha$ is $\kappa$-complete (as each chain $K\subset L$ has $\inf K\in\bar K\cap L$ and $\sup K\in\bar K\cap L$) and hence compact (by the equivalence $(1)\Leftrightarrow(3)$). Then the chain $L$ is locally compact.

We claim that the chain $L$ is closed in $X$. To derive a contradiction, assume that $\bar L\ne L$ and fix a point $x\in\bar L\setminus L$. First we show that $x\ge x_{\alpha+1}$ for every $\alpha<\kappa$. Since $\bar L$ is a chain, the assumption $x\not\ge x_{\alpha+1}$ implies $x<x_{\alpha+1}$ and hence $x\in \bar L\setminus{\uparrow}x_{\alpha+1}=\bar L_\alpha=L_\alpha\subset L$, which contradicts the choice of $x$. So, $x\ge x_{\alpha+1}>z_\alpha$ for any $\alpha<\kappa$ and hence $x$ is an upper bound for the chain $C=\{z_\alpha\}_{\alpha<\kappa}$. On the other hand, for any upper bound $b$ of $C$ in $X$, we get $C\subset{\downarrow}b$, $x\in\bar L\subset\bar C\subset \overline{{\downarrow}b}={\downarrow}b$ and $x\le b$. This means that $\sup C=x\in\bar L\subset\bar C$, which contradicts the choice of the chain $C$. This contradiction shows that the chain $L$ is closed in $X$.

Next, we show that the cardinal $\kappa$ is regular. In the opposite case, we can find a cofinal subset $\Lambda\subset \kappa$ of cardinality $|\Lambda|<\kappa$. By the minimality of the cardinal $\kappa$, the chain $\{x_\alpha\}_{\alpha\in\Lambda}$ has the smallest upper bound $u\in\overline{\{x_\alpha\}}_{\alpha\in\Lambda}\subset\bar L$. The cofinality of $\Lambda$ in $\kappa$ guarantees that $u=\sup L$. Since the chain $L$ does not have the largest element, $u\in \bar L\setminus L$, which contradicts the closedness of $L$ in $X$. This contradiction completes the proof of the regularity of the cardinal $\kappa$.

Endow the cardinal $\kappa=[0,\kappa)$ with the semilattice operation of minimum and consider the homomorphism $h:L\to\kappa$, $h:x_\alpha\mapsto\alpha$. Observe that for every ordinal $\alpha<\kappa$ the subbasic sets $(\leftarrow,\alpha),(\alpha,\to)\subset [0,\kappa)$ have open preimages $h^{-1}\big((\leftarrow,\alpha)\big)=L\setminus{\uparrow}\alpha$ and $h^{-1}\big((\alpha,\to)\big)=L\setminus {\downarrow}\alpha$ in $L$, which implies that the homomorphism $L$ is continuous. The compactness and openness of the sets $L_\alpha=L\cap{\downarrow}\alpha$ in $L$ imply that the continuous map $h$ is closed, so is a topological isomorphism. Now we see that the topological semilattice $X$ contains a closed subsemilattice $L$, which is topologically isomorphic to the infinite regular cardinal $\kappa$ endowed with the semilattice operation of minimum.

By analogy, we can prove that the assumption $\inf C\notin\bar C$ implies the existence of a closed chain in $X$, which is topologically isomorphic to the regular cardinal $\kappa$ endowed with the semilattice operation of maximum.
\end{proof}

\section{Closedness of chain-finite semilattices}

The main result of this section is the following theorem.

\begin{theorem}\label{t:main2} Each $T_1$-multimorphism $\Phi:X\multimap Y$ from a chain-finite topologized semilattice $X$ to a semitopological semilattice $Y$ has closed image $\Phi(X)$ in $Y$.
\end{theorem}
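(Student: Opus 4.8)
The plan is to argue by contraposition. Suppose $\Phi(X)$ is not closed and fix $y\in\overline{\Phi(X)}\setminus\Phi(X)$. For each neighbourhood $U$ of $y$ pick $x_U\in X$ and a witness $z_U\in\Phi(x_U)\cap U$; then $(z_U)_U$, indexed by the neighbourhood filter of $y$ ordered by reverse inclusion, is a net with $z_U\to y$. Because $\Phi$ is a multimorphism, $z_{U_1}\cdots z_{U_k}\in\Phi(x_{U_1}\cdots x_{U_k})$, and every set $\Phi(x)$ is closed (being $T_1$-closed). The target is to find a single element $p\in X$ together with a net lying entirely in the closed set $\Phi(p)$ and converging to $y$: this yields $y\in\overline{\Phi(p)}=\Phi(p)\subseteq\Phi(X)$, contradicting the choice of $y$. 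Since only separate continuity is available, every limit fixes all but one factor of a product; the key device is that $y$ is idempotent, so multiplying a net converging to $y$ by $y$ on one side returns to $y$ (as $yy=y$), whereas multiplying by a fixed witness drifts the limit strictly downward. This is why $y$ itself, not a chosen anchor, must play the role of the outermost factor.

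To choose $p$ I use chain-finiteness through both its ascending and descending consequences. For a cofinal set $C$ of indices let $m(C)$ be the least element of the subsemilattice of $X$ generated by $\{x_U:U\in C\}$ (it exists since that subsemilattice is downward directed and $X$ has no infinite descending chains). As $C$ shrinks $m(C)$ can only rise, so by the ascending chain condition I fix a cofinal $C^{*}$ for which $m(C^{*})=:p$ is maximal. Maximality gives the crucial \emph{descent property}: for every $q>p$ realised as a partial product $x_{U_1}\cdots x_{U_k}$ there are cofinally many $U\in C^{*}$ with $qx_U<q$ — otherwise the cofinal set $\{U\in C^*:q\le x_U\}$ would generate a subsemilattice with least element $\ge q>p$, contradicting maximality of $p$. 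Also $p\le x_U$ for all $U\in C^{*}$, whence $\Phi(p)\,\Phi(x_U)\subseteq\Phi(p)$ and, taking the limit in $z\mapsto a\,z$, one gets $a\,y\in\Phi(p)$ for every $a\in\Phi(p)$, i.e. $\Phi(p)\,y\subseteq\Phi(p)$.

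Now I run a Noetherian induction on the partial-product element $q\ge p$ (legitimate as $X$ has no infinite descending chains) proving: for every choice of indices with $x_{U_1}\cdots x_{U_k}=q$ one has $z_{U_1}\cdots z_{U_k}\,y\in\Phi(p)$. The base $q=p$ is exactly $\Phi(p)\,y\subseteq\Phi(p)$. For $q>p$ the descent property supplies cofinally many $U$ with $qx_U<q$; for each the inductive hypothesis gives $z_{U_1}\cdots z_{U_k}z_U\,y\in\Phi(p)$, and letting $U$ run (using separate continuity of $z\mapsto z_{U_1}\cdots z_{U_k}\,z\,y$ together with $yy=y$) the net $z_{U_1}\cdots z_{U_k}z_U\,y\to z_{U_1}\cdots z_{U_k}\,y$ stays in the closed set $\Phi(p)$, so the limit lies in $\Phi(p)$. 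Applying this to single-index products $q=x_U$ yields $z_U\,y\in\Phi(p)$ for all $U\in C^{*}$; since $z_U\to y$ and $z\mapsto z\,y$ is continuous, $z_U\,y\to yy=y$, forcing $y\in\overline{\Phi(p)}=\Phi(p)$, the contradiction that proves $\Phi(X)$ closed.

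The main obstacle is precisely the weakness of the hypotheses on $Y$: with only separate continuity one cannot find neighbourhoods $U$ of $y$ with $UU\subseteq V$, so the square-neighbourhood technique of Theorem~\ref{t:main1} is unavailable, products of many small witnesses are uncontrolled, and no single approximating $\Phi(x)$ need accumulate at $y$. The argument therefore never forms an infinite product; it keeps every finite product inside the one fixed closed set $\Phi(p)$ and reaches $y$ only in the final one-factor limit, exploiting idempotency of $y$. Producing the correct $p$ and guaranteeing that the descent to it can be carried out through cofinally many indices at every stage is the technical heart, and it is exactly here that chain-finiteness, in both its ascending and descending halves, is used.
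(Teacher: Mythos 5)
Your argument is correct, and it reaches the conclusion by a genuinely different route than the paper's. The paper also starts from a point $y\in\overline{\Phi(X)}\setminus\Phi(X)$, but its anchor is a maximal element $e$ of the set $X_y=\{x\in X:\exists W\ni y \text{ open with } W\cap\Phi(X)\subset\Phi({\uparrow}x)\}$; the maximality of $e$ is then used to pick, one step at a time, witnesses $w_{n+1}\in W_n\cap\Phi(X)\setminus\Phi({\uparrow}x_n)$ and products $y_{n+1}=w_{n+1}y_n$, where $x_n$ is the least element of the subsemilattice $\{x\in{\uparrow}e:y_n\in\Phi(x)\}$, and the contradiction is that $(x_n)_{n\in\w}$ is an infinite strictly decreasing chain. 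You instead maximize the least element $m(C)$ of the subsemilattice generated by the chosen $x_U$ over cofinal families $C$ of neighbourhoods, convert that maximality into the descent property, and then use well-founded induction (legitimate, since chain-finiteness gives both chain conditions) to trap every finite product of witnesses, multiplied by $y$, inside the single closed set $\Phi(p)$, so that the contradiction is the concrete statement $y\in\Phi(p)\subset\Phi(X)$. The two proofs use the same raw ingredients in the same roles --- the ascending chain condition to produce the anchor, the descending chain condition to terminate, and idempotency of $y$ together with separate continuity to pass limits of nets through a fixed left factor --- but your version localizes the whole argument to one closed value $\Phi(p)$ and exhibits where $y$ actually lands, at the cost of the bookkeeping with cofinal subfamilies. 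Two small points you should spell out in a written version: first, that when $\{U\in C^*:qx_U<q\}$ fails to be cofinal its complement $\{U\in C^*:q\le x_U\}$ in $C^*$ is cofinal (this uses the directedness of the neighbourhood filter, not merely the cofinality of $C^*$); second, that $p$ itself is realized as a finite product $x_{U_1}\cdots x_{U_k}$, so that $\Phi(p)\ne\emptyset$ and the base case of the induction is non-vacuous.
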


\begin{proof} Assuming that $\Phi(X)$ is not closed in $Y$, fix a point $y\in \overline{\Phi(X)}\setminus\Phi(X)$.

Let $\tau_y$ be the family of all open neighborhoods of $y$ in $Y$.
In the semilattice $X$ consider the subset $$X_y=\{x\in X:\exists
W\in\tau_y\;\;W\cap\Phi(X)\subset\Phi({\uparrow}x)\}.$$ The set $X_y$ contains the smallest element of the semilattice $X$ and hence is
not empty. Let $e$ be a maximal element of $X_y$ and $W\in \Tau_y$
be an open neighborhood of $y$ such that $W\cap\Phi(X)\subset\Phi({\uparrow}e)$. Since the set $\Phi(e)\not\ni y$ is closed in $Y$, we can replace $W$ by a smaller neighborhood and additionally assume that $W\cap\Phi(e)=\emptyset$.

 By induction we shall construct two sequences of points $(y_n)_{n\in\w}$ and $(w_n)_{n\in\w}$ in $W\cap\Phi(X)$ such that for every $n\in\w$ the following conditions are satisfied:
\begin{itemize}
\item[(i)] $y_n,y_ny\in W$;
\item[(ii)] $w_{n+1}\notin\Phi({\uparrow}x_n)$ where $x_n$ is the smallest element of the semilattice $S_n=\{x\in{\uparrow}e:y_n\in\Phi(x)\}$;
\item[(iii)] $y_{n+1}=y_nw_{n+1}$.
\end{itemize}

To start the inductive construction, choose any point $y_0$ in the set $\Phi(X)\cap W\cap\{z\in Y:zy\in W\}$ and observe that the condition (i) is satisfied. Assume that for some $n\in\w$ a point  $y_n\in W$ with $y_ny\in W$ has been constructed.

Taking into account that $\Phi$ is a multimorphism and $y_n\in \Phi(X)\cap W\subset\Phi({\uparrow}e)$, we conclude that the set $S_n:=\{x\in {\uparrow}e:y_n\in\Phi(x)\}$ is a non-empty subsemilattice of the chain-finite semilattice $X$, so $S_n$ has the smallest element $x_n=\inf S_n$. It follows from $y_n\in W\subset Y\setminus\Phi(e)$ that $x_n\ne e$ and hence $x_n>e$. The separate continuity of the semigroup operation on $Y$ implies that the set $W_n=\{z\in W:zy_n,zy\in W\}$ is an open neighborhood of $y$. The maximality of $e$ guarantees that $W_n\cap\Phi(X)\not\subset\Phi({\uparrow}x_n)$. So, we can
choose a point $w_{n+1}\in W_n\cap\Phi(X)\setminus\Phi({\uparrow}x_n)$ and put $y_{n+1}=w_{n+1}y_n$. This completes the inductive step.

After completing the inductive construction, consider the sequence $(x_n)_{n\in\w}$ of the smallest elements of the semilattices $S_n=\{x\in{\uparrow}e:y_n\in \Phi(x)\}$. We claim that the sequence $(x_n)_{n\in\w}$ is strictly decreasing. Indeed, for every $n\in\w$, we can use the inclusion $w_{n+1}\in W\cap\Phi(X)\subset\Phi({\uparrow}e)$ to choose an element $x\in {\uparrow}e$ with $w_{n+1}\in\Phi(x)$.
The point $x$ does not belong to ${\uparrow}x_n$ as $w_{n+1}\notin\Phi({\uparrow}x_n)$. Then $y_{n+1}=w_{n+1}y_n\in \Phi(x)\cdot\Phi(x_n)\subset\Phi(xx_n)=\Phi(x_nx)$ and hence $x_nx\in S_{n+1}$ and $x_{n+1}\le x_nx<x_n$. On the other hand, the chain-finite semilattice $X$ does not contain strictly decreasing sequences.
\end{proof}

We recall that $\sTsLone$ denotes the class of semitopological semilattices satisfying the separation axiom $T_1$.

\begin{corollary}\label{c:main2} Each chain-finite topologized semilattice is $\mathsf{m}_{\!\mathsf{1}\!}{:}\sTsLone$-closed and $\mathsf{p}{:}\sTsLone$-closed.
\end{corollary}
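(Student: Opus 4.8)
The plan is to deduce Corollary~\ref{c:main2} from Theorem~\ref{t:main2} in exactly the same two-step pattern used to derive Corollary~\ref{c:main1} from Theorem~\ref{t:main1}. The first claim, that a chain-finite topologized semilattice $X$ is $\mathsf{m}_{\!\mathsf{1}\!}{:}\sTsLone$-closed, should be essentially immediate: by definition a morphism of the category $\mathsf{m}_{\!\mathsf{1}\!}{:}\sTsLone$ is an upper semicontinuous $T_1$-multimorphism $\Phi:X\multimap Y$ to a semitopological semilattice $Y\in\sTsLone$, and Theorem~\ref{t:main2} states precisely that any such $T_1$-multimorphism from a chain-finite $X$ has closed image $\Phi(X)$ in $Y$. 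So the first assertion is just an unpacking of the definition of $\mathsf{m}_{\!\mathsf{1}\!}{:}\C$-closedness together with Theorem~\ref{t:main2}.

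The second claim requires showing $X$ is $\mathsf{p}{:}\sTsLone$-closed, i.e.\ that every closed subsemilattice $Z\subset X$ is $\mathsf{h}{:}\sTsLone$-closed. First I would take an arbitrary closed subsemilattice $Z\subset X$ and a continuous homomorphism $h:Z\to Y$ into a $T_1$ semitopological semilattice $Y$, and aim to prove $h(Z)$ is closed in $Y$. Following the template of Corollary~\ref{c:main1}, I would pass to the closure: $h(Z)$ is a subsemilattice of $Y$, hence so is $\overline{h(Z)}$, and replacing $Y$ by $\overline{h(Z)}$ I may assume $Y=\overline{h(Z)}$ is a $T_1$ semitopological semilattice. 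Then I would encode $h$ as the partial multimorphism $\Phi:X\multimap Y$ with $\Phi(x)=\{h(x)\}$ for $x\in Z$ and $\Phi(x)=\emptyset$ otherwise. One checks $\Phi$ is a multimorphism, that upper semicontinuity of $\Phi$ follows from continuity of $h$ (for closed $F\subset Y$ the preimage $\Phi^{-1}(F)=h^{-1}(F)$ is closed in $Z$, hence closed in $X$ since $Z$ is closed), and that $\Phi$ is a $T_1$-multimap because each singleton in the $T_1$-space $Y$ is closed. Chain-finiteness passes to the closed subsemilattice, but in fact I only need chain-finiteness of the whole $X$, which Theorem~\ref{t:main2} requires; applying that theorem to $\Phi$ gives that $\Phi(X)=h(Z)$ is closed in $Y=\overline{h(Z)}$, as desired.

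I do not expect any genuine obstacle here, since the argument is a direct transcription of the proof of Corollary~\ref{c:main1} with $T_2$ replaced by $T_1$ and ``topological'' weakened to ``semitopological.'' The one point deserving a moment's care is that Theorem~\ref{t:main2} is stated for $Y$ a \emph{semitopological} semilattice and only requires $\Phi$ to be a $T_1$-multimap (no upper semicontinuity is invoked in its statement), so I must confirm that the hypotheses actually used match what I can supply: the multimap $\Phi$ built from $h$ is a $T_1$-multimorphism into a semitopological semilattice $Y$, which is exactly the input Theorem~\ref{t:main2} consumes. Thus the mild subtlety is bookkeeping about which separation and continuity hypotheses Theorem~\ref{t:main2} demands, not any new mathematical difficulty.
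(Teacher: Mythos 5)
Your proposal is correct and is essentially the argument the paper intends: the first assertion is an immediate unpacking of $\mathsf{m}_{\!\mathsf{1}\!}{:}\sTsLone$-closedness via Theorem~\ref{t:main2}, and the second repeats the partial-homomorphism-to-multimorphism encoding from the proof of Corollary~\ref{c:main1}. (The only cosmetic difference is that your reduction to $Y=\overline{h(Z)}$ is unnecessary here, since Theorem~\ref{t:main2} already applies to an arbitrary $T_1$ semitopological semilattice target, whereas in Corollary~\ref{c:main1} that step was needed to turn the Hausdorff semigroup $Y$ into a semilattice.)
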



\section{Characterizing chain-finite topological semilattices}

In this section we characterize chain-finite Hausdorff topological semilattices. We recall that $\TsLone$ (resp. $\sTsLone$) denotes the class of (semi)topological semilattices satisfying the separation axiom $T_1$.

\begin{theorem}\label{t:finite} For a Hausdorff (semi)topological semilattice $X$ and a class $\C\subset\sTsLone$ containing the class $\TsLone$ (resp. $\sTsLone$), the following conditions are equivalent:
\begin{enumerate}
\item $X$ is chain-finite;
\item $X$ is $\mathsf{m}_{\!\mathsf{1}\!}{:}\C$-closed;
\item $X$ is $\pC$-closed;
\item $X$ is $\peC$-closed;
\item each closed countable chain in $X$ is $\mathsf{e}{:}\C$-closed.
\end{enumerate}
\end{theorem}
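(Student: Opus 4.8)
The plan is to prove the cycle of implications $(1)\Ra(2)\Ra(3)\Ra(4)\Ra(5)\Ra(1)$, of which only the last is substantial. The forward implications are soft. For $(1)\Ra(2)$, Corollary~\ref{c:main2} gives that a chain-finite $X$ is $\mathsf{m}_{\mathsf 1}{:}\sTsLone$-closed; since $\C\subset\sTsLone$, every upper semicontinuous $T_1$-multimorphism into an object of $\C$ is in particular one into $\sTsLone$, so its range is closed, whence $X$ is $\mathsf{m}_{\mathsf 1}{:}\C$-closed. The implication $(2)\Ra(3)$ is an instance of the general chain $\mathsf{m}_{\mathsf 1}{:}\C$-closed $\Ra\mathsf{m}_{\mathsf 2}{:}\C$-closed $\Ra\pC$-closed recorded in the introduction; $(3)\Ra(4)$ holds because every $\hC$-closed semilattice is $\eC$-closed; and $(4)\Ra(5)$ is immediate, since a closed countable chain is a closed subsemilattice of $X$.

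For the crucial implication $(5)\Ra(1)$ I would argue by contraposition, producing from the failure of chain-finiteness a closed countable chain that is \emph{not} $\eC$-closed. If $X$ is not chain-finite it contains an infinite chain, and since an infinite linearly ordered set cannot be simultaneously a well-order and a reverse well-order, it carries a strictly monotone $\w$-sequence; by the minimum/maximum duality I may assume it is strictly increasing, say $(a_n)_{n\in\w}$. The set $C=\{a_n:n\in\w\}$ is a subsemilattice isomorphic to $(\w,\min)$, and since the cones $\dowa a$, $\upa a$ are closed in any Hausdorff (semi)topological semilattice, each $a_n$ is isolated in $C$. The same closedness of cones shows that $\overline C$ is a chain, that each point of $\overline C\setminus C$ is a strict upper bound of $C$, and finally that $\overline C\setminus C$ contains at most one point $b$ (the top of $\overline C$, equal to $\sup C$). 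Thus $K:=\overline C$ is a \emph{closed countable chain}, equal either to $C$ (a discrete copy of the ordinal $\w$) or to $C\cup\{b\}$.

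It remains to show that $K$ is not $\eC$-closed, by adjoining to it a ``ghost'' limit point. I would set $Y=K\sqcup\{q\}$, declare $q$ a new top element (so the meet extends to a semilattice operation on $Y$), and take as neighborhoods of $q$ the sets $\{q\}\cup(O\cap C)$, where $O$ runs over the open neighborhoods in $X$ of the upper end of the sequence (tails of $C$ when $K=C$, neighborhoods of $b$ when $K=C\cup\{b\}$), further refined so as to omit any prescribed finite set. By construction $q\in\overline C$, so the image of the inclusion $K\hookrightarrow Y$ is not closed; the traces of the neighborhoods of $q$ on $K$ are open in $K$, so the inclusion is a topological embedding; and the omission of finite sets makes $Y$ a $T_1$-space, so that $Y\in\TsLone\subset\C$. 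In the discrete case one may instead simply use the isomorphic embedding of $\w$ onto the non-closed subset $[0,\w)$ of the ordinal semilattice $\w+1\in\Ord\subset\C$.

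The main obstacle is verifying that the extended operation on $Y$ is continuous (jointly in the topological case, separately in the semitopological one) at pairs involving $q$, most delicately at $(q,b)$: a priori a product $\min(a_m,b)=a_m$ with $a_m$ close to $q$ need not land in a neighborhood of $b$. This is precisely why the neighborhood filter of $q$ must be \emph{pulled back from that of $b$} rather than taken to be tails of $(a_n)$; with that choice every relevant product lies in $O\cap K$ and continuity follows. I also expect the non-Hausdorffness of $Y$ to be indispensable here: a compact closed chain such as $\w+1$ cannot be realized as a non-closed subset of any Hausdorff target, so the full strength of admitting $T_1$ (rather than only $T_2$) semilattices in $\C$ is used exactly at this step, which is what distinguishes chain-finiteness from chain-compactness.
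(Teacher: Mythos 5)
Your forward implications and your treatment of the strictly \emph{increasing} case are essentially the paper's: the one-point extension you describe (a new top element $q$ acting as an identity on $K$, with neighborhoods pulled back from those of $b=\sup C$ restricted to $C=K\setminus{\uparrow}b$) is precisely the construction of Lemma~\ref{l:non}(1), and your observation that the neighborhood filter of $q$ must be the trace of that of $b$ is the right one. (A minor quibble: for $(2)\Ra(3)$ you invoke the chain $\mathsf{m}_{\!\mathsf 1\!}{:}\C\Ra\mathsf{m}_{\!\mathsf 2\!}{:}\C\Ra\pC$ from the introduction, which is stated only for classes of \emph{Hausdorff} semigroups, whereas here $\C\subset\sTsLone$; but the direct argument works, since a continuous homomorphism into a $T_1$-space is already a $T_1$-multimorphism, so this is harmless.) The genuine gap is the sentence ``by the minimum/maximum duality I may assume it is strictly increasing.'' There is no such duality: the operation of $X$ is the meet with respect to the single partial order of $X$, and reversing the order of a chain does not yield another subsemilattice of $X$. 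A discrete copy of $(\w,\max)$, i.e.\ a strictly decreasing chain $a_0>a_1>\cdots$, need not be accompanied by any infinite increasing chain, so the decreasing case cannot be discarded.

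In that case your construction breaks down. Now $p=\inf C$ is the \emph{bottom} of $K=\overline C=C\cup\{p\}$, and for a one-point extension $Y=K\cup\{q\}$ with $q\in\overline C\setminus K$ the two natural choices of operation both fail. If $q$ clones $p$ (i.e.\ $qx=px$, the analogue of Lemma~\ref{l:non}(1)), then $qa_m=p$ for all $m$; since by $T_1$ some neighborhood of $q$ misses $p$, every neighborhood of $q$ meets $C$, so separate continuity of $y\mapsto qy$ at $y=q$ (where $qq=q$) forces $p$ to lie in every neighborhood of $q$, i.e.\ $q\in\overline{\{p\}}$, contradicting $T_1$. If instead $q$ is a new bottom ($qx=q$), the same argument applied to $y\mapsto yp$ at $y=q$ gives the same contradiction, since $a_mp=p$ while $qp=q$. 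The structural reason is that in the increasing case the adjoined point acts as an identity and products never leave the set one started from, whereas in the decreasing case products of points near $q$ with points of $K$ collapse onto the single point $p$. The paper resolves this with a genuinely different device, the two-sheeted (doubling) construction of Lemma~\ref{l:non}(2), in which $Y=(K\times\{0\})\cup(V\times\{1\})$ for $V={\uparrow}p\setminus\{p\}$ and the embedded copy of $K$ is $\{(p,0)\}\cup(V\times\{1\})$; your proposal is missing this idea, and without it the implication $(5)\Ra(1)$ is not proved.
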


\begin{proof} The implication $(1)\Ra(2)$ is proved in Corollary~\ref{c:main2} and $(2)\Ra(3)$ can be proved by analogy with Corollary~\ref{c:main1}. The implications $(3)\Ra(4)\Ra(5)$ are trivial. To prove the implication $(5)\Ra(1)$, assume that each closed countable chain in $X$ is $\eC$-closed. To derive a contradiction, assume that the semilattice $X$ contains an infinite chain.

Applying Ramsey Theorem, we can show that $X$ contains a sequence $C=\{x_n\}_{n\in\w}$ which is either strictly increasing or strictly decreasing. The Hausdorff property of $X$ and the separate continuity of the semilattice operation on $X$ imply that for every $x\in X$ the lower and upper cones ${\downarrow}x$ and ${\uparrow}x$ are closed in $X$.
This fact can be used to show that each point $x_n$ is isolated in the chain $C$ and hence $C$ is topologically isomorphic to the ordinal $\w$ endowed with the semilattice operation of maximum or minimum. Since the ordinal $\w$ is not $\Tau_1$-closed, the countable chain $C$ cannot be closed in $X$. Consequently, the chain $\bar C$ contains a point $p\notin C$.

If the sequence $C=\{x_n\}_{n\in\w}$ is strictly increasing then $p=\sup C$ and hence $\bar C\setminus C$ consists of a unique point $p$.
It can be shown that the closed chain $\bar C=C\cup\{p\}$ is a topological semilattice.
It follows that the upper cone ${\uparrow}p\cap\bar C$ of $p$ in $\bar C$ is not open in the countable chain $\bar C=C\cup\{p\}$.

If $C=\{x_n\}_{n\in\w}$ is strictly decreasing, then $p=\inf C$ is a non-isolated point of $\bar C$ with $\bar C={\uparrow}p\cup{\downarrow}p$ and $p\ne xy$ for any points $x,y\in\bar C\setminus\{p\}$. In both cases Lemma~\ref{l:non} (proved below) implies that the Hausdorff topological semilattice $\bar C$ is not $\mathsf{e}{:}\C$-closed.
\end{proof}

\begin{lemma}\label{l:non} Let $\C\in\{\TsLone,\sTsLone\}$. If a semitopological semilattice $X\in\C$ is  $\C$-closed, then
\begin{enumerate}
\item for any $p\in X$ the upper cone ${\uparrow}p$ is open in $X$;
\item any point $p\in X$ with $X={\uparrow}p\cup{\downarrow}p$ is either isolated or $p=xy$ for some points $xy\in X\setminus\{p\}$.
\end{enumerate}
\end{lemma}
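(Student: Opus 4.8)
The lemma is a contrapositive-style tool: it shows that if a semitopological semilattice $X \in \C$ fails one of two structural conditions, then $X$ cannot be $\C$-closed. This is exactly what is needed at the end of the proof of Theorem~\ref{t:finite}: in both the increasing and decreasing cases, the chain $\bar C$ violates one of conditions (1) or (2), so the lemma yields that $\bar C$ is not $\C$-closed, the desired contradiction. The strategy for proving the lemma itself is to construct, in each case, an explicit isomorphic topological embedding $f : X \to Y$ into some $Y \in \C$ whose image $f(X)$ is not closed in $Y$.

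**Plan for part (1).** Suppose $p \in X$ is a point whose upper cone ${\uparrow}p$ is not open in $X$. The plan is to adjoin a single new point to $X$ that will serve as a limit witnessing non-closedness. Concretely, I would let $Y = X \cup \{\infty\}$ where $\infty$ is a fresh point, extend the semilattice operation by declaring $\infty$ to sit just below $p$ — that is, $\infty \le p$, and more precisely set $\infty \cdot x = \infty$ for $x \in {\uparrow}p$ and $\infty \cdot x = x \cdot p$ (equivalently the meet of $x$ and $p$ computed in $X$) for the remaining $x$. One checks this is associative, commutative and idempotent. For the topology I would take the neighborhoods of $\infty$ in $Y$ to be sets of the form $\{\infty\} \cup (U \setminus {\uparrow}p)$ where $U$ ranges over neighborhoods of $p$ in $X$; the failure of ${\uparrow}p$ to be open at $p$ is exactly what guarantees $\infty \in \overline{X}$ in $Y$, so $X = f(X)$ (with $f$ the identity embedding) is not closed. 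The main thing to verify is that $Y$ lies in $\C$ (separate continuity of the extended operation and the $T_1$ or Hausdorff axiom) and that the embedding is topological; the separate-continuity check at $\infty$ is where the neighborhood base was engineered to cooperate.

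**Plan for part (2).** Now suppose $p$ satisfies $X = {\uparrow}p \cup {\downarrow}p$, is not isolated, and is not a product $xy$ with $x,y \in X \setminus \{p\}$. The hypothesis $X = {\uparrow}p \cup {\downarrow}p$ means $p$ is comparable to every element, so $p$ behaves like a cut point of a linear structure through it. The plan is again to build $Y = X \cup \{\infty\}$, this time placing $\infty$ strictly between ${\downarrow}p \setminus \{p\}$ and ${\uparrow}p$, i.e.\ just below $p$ but above everything strictly below $p$. The condition ``$p \ne xy$ for $x,y \ne p$'' is precisely what allows $\infty$ to be inserted consistently as a semilattice element without collapsing: it guarantees $p$ is meet-irreducible in the relevant sense, so defining $\infty \cdot x = x$ for $x < p$ and $\infty \cdot x = \infty$ for $x \ge p$ (and $x$ incomparable cases handled via the comparability hypothesis) yields a well-defined associative operation. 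Non-isolatedness of $p$ supplies a net in $X$ converging to the ``gap'' where $\infty$ is placed, making $X$ non-closed in $Y$.

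**Main obstacle.** The hard part will be verifying that the extended operation on $Y$ is genuinely associative and separately continuous, and that $Y$ satisfies the required separation axiom, uniformly across the two $\C$ choices $\TsLone$ and $\sTsLone$ — the whole point of stating the lemma for semitopological (rather than topological) semilattices is that only separate continuity is available, so the continuity checks at $\infty$ must be done coordinate-wise and cannot appeal to joint continuity. A secondary subtlety is confirming in part (2) that the algebraic insertion of $\infty$ respects the comparability hypothesis in the incomparable-to-$p$ cases, which is exactly why the non-representability condition $p \ne xy$ is assumed; I expect the bulk of the work to be in pinning down the operation so that $\infty$ absorbs correctly and the embedding $X \hookrightarrow Y$ is both algebraic and topological.
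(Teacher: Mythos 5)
Your plan for part (1) is essentially the paper's: adjoin a single new point whose neighborhoods are $\{\infty\}\cup(V\setminus{\uparrow}p)$ with $V$ ranging over neighborhoods of $p$, and extend the operation so that all continuity checks at $\infty$ reduce to continuity of translations of $X$ at $p$. (The paper sets $p'x=xp$ for \emph{all} $x$, while you treat ${\uparrow}p$ separately; both extensions are associative and separately continuous, and both give a $T_1$ extension in which $X$ is open and non-closed.) One step you elide: the hypothesis is only that ${\uparrow}p$ is not open \emph{somewhere}, and you must first upgrade this to $p\in\overline{X\setminus{\uparrow}p}$ before your neighborhood filter at $\infty$ is nontrivial. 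This localization uses separate continuity (if $p\in O_p\subset{\uparrow}p$ for some open $O_p$, then ${\uparrow}p=\{x\in X:px\in O_p\}$ is open everywhere), exactly as in the paper.

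Part (2), however, cannot be carried out by a one-point extension, and this is not a matter of unfinished verification. Test your plan on $X=\{p\}\cup\{x_n:n\in\omega\}$ with $x_0>x_1>\cdots$ and $x_n\to p=\inf_nx_n$; this is precisely the decreasing case in which Lemma~\ref{l:non}(2) is invoked in the proof of Theorem~\ref{t:finite}. Here ${\downarrow}p\setminus\{p\}=\emptyset$, so your $\infty$ is a new bottom element with $\infty x=\infty$ for all $x\in X$; then $L_\infty(x_n)=\infty$ while $x_n\to p$ and $L_\infty(p)=p\infty=p$, so continuity of $L_\infty$ at $p$ forces $\infty$ into every neighborhood of $p$, contradicting $T_1$. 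In fact \emph{no} one-point $T_1$ extension $Y=X\cup\{\infty\}$ with separately continuous operation can leave this $X$ non-closed: every neighborhood of $\infty$ must meet $X$ in an infinite set, continuity of $L_p$ at $\infty$ forces $p\infty=p$, and a short case analysis (whether $\infty$ lies below all $x_n$, above some $x_k$, or satisfies $\infty x_n=p$ cofinally) shows that one of $L_\infty$, $L_{x_k}$, $L_p$ must fail to be continuous. The paper's construction is genuinely of a different kind: it doubles the whole open subsemilattice $V={\uparrow}p\setminus\{p\}$, taking $Y=(X\times\{0\})\cup(V\times\{1\})$ with $(x,i)(y,j)=(xy,\min\{i,j\})$ and embedding $X$ by $x\mapsto(x,0)$ on ${\downarrow}p$ and $x\mapsto(x,1)$ on $V$, so that the uncovered copy $V\times\{0\}$ accumulates on the image. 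The hypothesis $p\ne xy$ for $x,y\ne p$ is used there to make $V$ a subsemilattice (so the doubled layer is closed under the operation), not to insert a single new point consistently. You would need to replace your part (2) construction by an extension of this multi-point type.
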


\begin{proof} 1. Assume that for some point $p\in X$ the upper cone ${\uparrow}p$ is not open in $X$. We claim that $p$ belongs to the closure of the set $X\setminus{\uparrow}p$. In the opposite case $p$ has an open neighborhood $O_p\subset X$ such that $O_p\subset {\uparrow}p$ and the upper cone ${\uparrow}p=\{x\in X:px\in O_p\}$ of $p$ is open in $X$ by the separate continuity of the semilattice operation on $X$.
But this contradicts our assumption.

Choose any point $p'\notin X$ and consider the space $Y=X\cup \{p'\}$ endowed with the $T_1$-topology consisting of sets $W\subset Y$ satisfying two conditions:
\begin{itemize}
\item $W\cap X$ is open in $X$ and
\item if $p'\in W$, then $V\setminus{\uparrow}p\subset W$ for some neighborhood $V$ of $p$ in $X$.
\end{itemize}

Extend the semilattice operation from $X$ to $Y$ letting $p'p'=p'$ and $p'x=xp'=xp$ for $x\in X$. It is easy to check that $Y$ is a semitopological semilattice in the class $\C$ containing $X$ as an open non-closed subsemilattice and witnessing that $X$ is not $\C$-closed.
\smallskip

2. Take a point $p\in X$ with $X={\uparrow}p\cup{\downarrow}p$ and assume that $p\ne xy$ for any points $x,y\in X\setminus\{p\}$.
The latter condition implies that the set $V={\uparrow}p\setminus\{p\}=X\setminus{\downarrow}p$ is a subsemilattice in $X$. By the first statement, the upper cone ${\uparrow}p$ is open, so $V$ is an open subsemilattice in $X$. Assuming that the point $p$ is not isolated, we conclude that $p$ belongs to the closure of $V$.

Consider the set $Y=(X\times\{0\})\cup(V\times\{1\})$ endowed with the $T_1$-topology $\tau_Y$ consisting of all sets $U\subset Y$ such that
\begin{itemize}
\item for every $x\in X\setminus {\uparrow}p$ with $(x,0)\in U$ there exists a neighborhood $U_x\subset X$ of $x$ such that $U_x\times\{0\}\subset U$;
\item for every $x\in X\setminus{\downarrow}p$ with $(x,1)\in U$ there exists a neighborhood $U_x\subset V$ of $x$ such that $U_x\times\{1\}\subset U$;
\item for every $x\in X\setminus{\downarrow}p$ with $(x,0)\in U$ there exist a neighborhood $U_x\subset V$ of $x$ and a neighborhood $U_p\subset X$ of $p$ such that $ (U_x\times\{0\})\cup\big((U_p\cap V)\times\{0,1\}\big)$;
\item if $(p,0)\in U$, then there exists a neighborhood $U_p\subset X$ of $p$ such that $(U_p\times\{0\})\cup\big((U_p\cap V)\times\{1\}\big)\subset U$.
\end{itemize}
On the topological space $Y=(X\times\{0\})\cup(V\times\{1\})$ consider the semilattice operation
defined by the formula $(x,i)\cdot (y,j)=(xy,\min\{i,j\})$.

It can be shown that $Y$ is a semitopological semilattice in the class $\C$ and the map $f:X\to Y$ defined by
$$f(x)=\begin{cases}(x,0)&\mbox{if $x\in{\downarrow}p$},\\
(x,1)&\mbox{otherwise}
\end{cases}
$$is an isomorphic topological embedding of $X$ with dense image $f(X)\ne Y$ in $Y$, witnessing that $X$ is not $\C$-closed.
\end{proof}

\begin{problem}\label{prob} Is each $\mathsf{e}{:}\!\TsLone$-closed Hausdorff topological semilattice $X$ chain-finite?
\end{problem}

The answer to Problem~\ref{prob} is affirmative for discrete or linear Hausdorff topological semilattices, see Theorems~\ref{t:discrete} and \ref{t:linear}.

\section{Characterizing discrete chain-finite semilattices}

The main results of this paper is the following theorem showing that for discrete topological semilattices many conditions of Diagram 1 are equivalent to the chain-finiteness.

\begin{theorem}\label{t:discrete} Let $\C$ be a class of topologized semilattices such that $\TsLw\subset\C\subset \sTsLone$. For a discrete topological semilattice $X$ the following statements are equivalent:
\begin{enumerate}
\item $X$ is chain-finite;
\item $X$ is chain-compact;
\item $X$ is $k$-complete;
\smallskip
\item $X$ is $\mathsf{m}_{\!\mathsf{1}\!}{:}\C$-closed;
\item $X$ is $\mathsf{m}_{\!\mathsf{2}\!}{:}\C$-closed;
\smallskip
\item $X$ is $\pC$-closed;
\item $X$ is $\hC$-closed;
\item $X$ is $\eC$-closed;
\item $X$ is ${\mathsf e}{:}\!\TsLw$-closed.
\end{enumerate}
\end{theorem}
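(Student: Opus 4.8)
The plan is to prove Theorem~\ref{t:discrete} by establishing a cycle of implications, exploiting the fact that for discrete semilattices the various topological closedness hypotheses collapse to combinatorial statements. First I would handle the ``easy'' block of equivalences $(1)\Leftrightarrow(2)\Leftrightarrow(3)$: since $X$ is discrete, every chain is closed, so chain-compactness forces every chain to be compact-and-discrete, hence finite; the reverse is trivial, and the equivalence with $k$-completeness follows from Theorem~\ref{t:main3} applied in the discrete setting (where $\TsLw\subset\C$ guarantees the relevant ambient classes are available). Next I would invoke the general machinery already proved: Corollary~\ref{c:main2} gives $(1)\Ra(4)$, since a chain-finite semilattice is $\mathsf{m}_{\!\mathsf{1}\!}{:}\sTsLone$-closed and $\C\subset\sTsLone$; and Theorem~\ref{t:main1}/Corollary~\ref{c:main1} together with $(1)\Leftrightarrow(3)$ give $(1)\Ra(5)$. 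The downward implications $(4)\Ra(6)\Ra(7)\Ra(8)\Ra(9)$ and $(5)\Ra(6)$ are immediate from the general hierarchy $\mathsf{m}_{\!\mathsf{1}\!}{:}\C\text{-closed}\Ra\mathsf{m}_{\!\mathsf{2}}{:}\C\text{-closed}\Ra\pC\text{-closed}\Ra\hC\text{-closed}\Ra\eC\text{-closed}$ recorded in the introduction, noting $\eC\text{-closed}\Ra\mathsf{e}{:}\!\TsLw\text{-closed}$ because $\TsLw\subset\C$.

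The real content is closing the loop, i.e.\ proving $(9)\Ra(1)$: that $\mathsf{e}{:}\!\TsLw$-closedness of a \emph{discrete} $X$ forces chain-finiteness. I would argue by contraposition. Assuming $X$ contains an infinite chain, a Ramsey-type argument (as in the proof of Theorem~\ref{t:finite}) extracts a strictly monotone sequence $C=\{x_n\}_{n\in\w}$; since $X$ is discrete, $C$ is a countable discrete chain isomorphic to $\w$ under max or min. The goal is to construct an isomorphic topological embedding of $X$ into some ambient semilattice in $\TsLw$ whose image fails to be closed, thereby contradicting $(9)$.

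The main obstacle, and the step I would spend most effort on, is constructing this ambient local $\w_{\max}$- or $\w_{\min}$-semilattice $Y\in\TsLw$ into which $X$ embeds non-closedly. The natural idea is to adjoin a single limit point $p$ to $C$ to make the copy of $\w$ into a copy of $\w+1=[0,\w]$, the prototype closed-and-open chain required by the definition of $\TsLw$; the point $p$ witnesses that the image of $X$ is not closed. Concretely I would take $Y = X \sqcup \{p\}$ (as a set), declare $p$ to be the supremum (resp.\ infimum) of the embedded $C$ in the max (resp.\ min) case, define the extended semilattice operation so that $p\cdot x$ agrees with the appropriate comparison against the chain, and topologize $Y$ so that the only non-isolated point is $p$, with a neighborhood base consisting of cofinite tails of $C$ together with $p$. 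I must verify that (a) $Y$ is a Hausdorff semitopological (indeed topological) semilattice, (b) the copy of $\w+1=[0,\w]$ through $p$ is closed-and-open in $Y$ and carries the correct ordinal structure, so that $Y\in\TsLw$, (c) every other point of $X$ remains isolated in $Y$ so that the inclusion $X\hookrightarrow Y$ is an isomorphic topological embedding, and (d) $X$ is not closed in $Y$ since $p\in\overline{X}\setminus X$. Checking associativity and separate continuity of the extended operation at the single limit point $p$, and ensuring the embedding is topological rather than merely continuous, is where the delicate case analysis lives; the strictly increasing and strictly decreasing cases must be treated separately because they produce a local $\w_{\max}$- versus a local $\w_{\min}$-semilattice respectively.
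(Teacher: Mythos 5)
Your reduction to the implication $(9)\Rightarrow(1)$, and the routine implications surrounding it, are fine and match the paper. The gap is in the key construction. A one-point extension $Y=X\cup\{p\}$ cannot in general be made into a (semi)topological semilattice, so your verification step (a) fails. The problem is separate continuity at the pairs $(y,p)$ with $y\in X$: since $y$ is isolated in $Y$ and the neighborhoods of $p$ are the sets $\{p\}\cup(\text{cofinite tail of }C)$, continuity of $z\mapsto yz$ at $z=p$ forces the monotone sequence $(yx_n)_{n\in\w}$ to converge in $Y$ to $y\cdot p$, i.e.\ to be either eventually constant (if $y\cdot p\in X$) or eventually contained in every tail of $C$ (if $y\cdot p=p$). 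Neither need hold: the translated sequence $(yx_n)_{n\in\w}$ can be strictly monotone and disjoint from $C$, in which case it has no limit in $Y$ whatever value you assign to $y\cdot p$. The paper's own Example~\ref{ex:6.3} exhibits exactly this obstruction: for the discrete semilattice $X=\w\cup 2^{\le\w}$ and the chain $C=\w$, each $y\in 2^\w$ gives $y\cdot n=y|n$, a strictly increasing sequence in $2^{<\w}$ that never meets $C$; hence no one-point extension of $X$ along $C$ is even a semitopological semilattice.

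This is precisely why the paper's proof works inside the Stone--\v Cech compactification $\beta X$ instead: it fixes a free ultrafilter $\F$ containing $C$, observes $\F*\F=\F$, and takes the subsemilattice $S=X\cup\{x\F:x\in X^1\}$ with the subspace topology. Adjoining \emph{all} translates $x\F$ supplies a limit point for every translated sequence $(xx_n)_{n\in\w}$ simultaneously, which is what makes the extended operation jointly continuous; and each new non-isolated point $x\F$ still has a clopen neighborhood $xC\cup\{x\F\}$ algebraically isomorphic to $\w+1$, so $S\in\TsLw$ and $X$ sits in $S$ as a dense non-closed discrete subsemilattice. To salvage your few-point extension you would first have to replace $C$ by an infinite chain all of whose translates $(yx_n)_n$ stabilize or re-enter $C$ cofinally, and it is not clear such a chain always exists; some device on the order of the ultrafilter construction appears unavoidable.
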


\begin{proof} Taking into account Theorems~\ref{t:main2}, \ref{t:main3}, Theorem~\ref{t:finite} and the trivial implications in Diagram 1, it suffices to prove that each ${\mathsf e}{:}\!\TsLw$-closed discrete topological semilattice $X$ is chain-finite. To derive a contradiction, assume that $X$ contains an infinite chain $C$. Replacing $C$ by a smaller chain, we can assume that the chain $C$ is isomorphic to the ordinal $\w$ endowed with the semilattice operation of minimum or maximum.

Consider the Stone-\v Cech compactification $\beta X$ of the discrete space $X$. Elements of $\beta X$ are ultrafilters on $X$. Points of the discrete space $X$ can be identified with principal ultrafilters on $X$. For two ultrafilters $\U,\V\in\beta X$ their product $\U*\V$ is defined as the ultrafilter generated by the base consisting of the sets $\bigcup_{x\in U}xV_x$ where $U\in\U$ and $(V_x)_{x\in U}\in\V^U$. It is known \cite[\S 4.1]{HS} that the operation $*$ turns $\beta X$ into a semigroup, containing $X$ as a subsemigroup. Fix any free ultrafilter $\F$ containing the chain $C$. Taking into account that $FF\subset F$ for any subset $F\subset C$, we conclude that $\F*\F=\F$. So, $\F$ is an idempotent of the semigroup $\beta X$. The definition of the semigroup operation on $\beta X$ ensures that principal ultrafilters commute with all other elements of the semigroup $\beta X$. This implies that the subsemigroup $S$ of $\beta X$, generated by the set $X\cup\{\F\}$, is a semilattice, equal to $X\cup \{x\F:x\in X^1\}$ where $X^1=X\cup\{1\}$ and $1$ is the largest element of $X$ if $X$ has the largest element and $1$ is an external unit for $X$ if $X$ does not have the largest element.

Endow $S$ with the subspace topology inherited from $\beta X$. For each ultrafilter $\U\in S$ a neighborhood base at $\U$ consists of the sets $\langle U\rangle=\{\mathcal V\in S:U\in\mathcal V\}$ where $U\in\U$.
Being a subspace of the zero-dimensional compact Hausdorff space $\beta X$, the space $S$ is zero-dimensional and Tychonoff. Moreover, $X$  is a dense discrete subspace in $S$.

We claim that the binary operation $*:S\times S\to S$, $(\U,\V)\to \U*\V$ is (jointly) continuous at each pair $(\U,\V)\in S\times S$. If $\U$ and $\V$ are principal ultrafilters, then they are isolated points of $S\subset \beta X$ and the operation $*$ is continuous at $(\U,\V)$ by the trivial reason.

Next, assume that $\U$ and $\V$ are free ultrafilters. Then $\U=u\F$ and $\V=v\F$ for some $u,v\in X^1=X\cup\{1\}$. Given any neighborhood $O_{\U*\V}$ of $\U*\V=(u\F)*(v\F)=uv*(\F\F)=uv\F$, find a set $F\in\F$ such that $F\subset C$ and $\langle uvF\rangle\subset O_{\U*\V}$. Observe that $O_\U:=\langle uF\rangle$ and $O_\V:=\langle vF\rangle$ are neighborhoods of $\U$ and $\V$, respectively, such that for any ultrafilters $\U'\in O_\U$ and $\V'\in O_\V$ we get $\U'*\V'\in \langle uvF\rangle\subset O_{\U*\V}$. Indeed, for the sets $uF\in\U'$ and $vF\in\V'$, we get
$$\U*\V\ni\bigcup_{x\in uF}xvF\subset uFvF=uvFF=uvF.$$ Consequently, $uvF\in \U'*\V'$ and $\U'*\V'\in\langle uvF\rangle\subset O_{\U*\V}$.

By analogy we can consider the case when one of the ultrafilters $\U$ or $\V$ is principal.

Observe that each non-isolated point $s\in S$ is equal to $x\F$ for some $x\in X^1$ and $O_s=\{\U\in S:xC\in\U\}=xC\cup\{x\F\}$ is an open-and-closed neighborhood of $s$ in $S$, algebraically isomorphic to the ordinal $\w+1$ endowed with the semilattice operation of minimum or maximum. This means that $S\in\Tau_\w$ is a zero-dimensional Hausdorff topological semilattice, containing the discrete topological semilattice $X$ as a dense (non-closed) subsemilattice, and witnessing that $X$ is not ${\mathsf e}{:}\!\TsLw$-closed.
\end{proof}

The semilattice $S$ in the proof of Theorem~\ref{t:discrete} needs not be normal.

\begin{example}\label{ex:6.3} There exists a discrete topological semilattice $X$ containing a countable infinite chain $C$ such that
\begin{enumerate}
\item for any free ultrafilter $\U\in\beta(X)$ containing $C$, the subsemilattice $X\cup\{\U\}\cup\{x\U:x\in X\}$ of $\beta(X)$ generated by $X\cup\{\U\}$ is not normal;
\item for any chain $D\subset X\setminus C$ and any free ultrafilter $\V\in\beta(X)$ containing $D$, the subsemilattice $X\cup\{\V\}\cup\{x\V:x\in X\}$ of $\beta(X)$ generated by $X\cup\{\V\}$ coincides with $X\cup\{\V\}$ and hence is a perfectly normal space with a unique non-isolated point $\V$.
\end{enumerate}
\end{example}

\begin{proof} Let $2^{\le \w}=2^\w\cup 2^{<\w}$ where $2^{<\w}=\bigcup_{n\in\w}2^n$. For two functions $x,y\in 2^{\le \w}$ let $xy\in 2^{\le\w}$ be a function defined on the set  $\dom(xy):=\{n\in\dom(x)\cap\dom(y):\forall k\le n\; x(k)=y(k)\}$ by the formula $xy(k)=x(k)=y(k)$ for $k\in\dom(xy)$. It is easy to see that the operation $2^{\le \w}\times 2^{\le\w}\to 2^{\le\w}$, $(x,y)\mapsto xy$, turns $2^{\le \w}$ into a semilattice.

Let $X=\w\cup 2^{\le\w}$, endow $X$ with the discrete topology, and extend the semilattice operation of $2^{\le\w}$ to $X$ letting $xn=nx:=x|n$ and $nm=\min\{n,m\}$ for any $x\in 2^{\le \w}$ and $n,m\in\w$.

We claim that the semilattice $X$ and the chain $C:=\w\subset X$ have  the required properties.
\smallskip

1. Take any free ultrafilter $\U$ on $X$ with $\w\in\U$ and consider the subsemilattice $S:=X\cup\{\U\}\cup\{x\U:x\in X\}$ generated by the set $X\cup\{\U\}$. It is easy to see that the set $Z=2^{<\w}\cup\{x\U:x\in 2^{\le \w}\}$ is closed-and-open in $S$, the countable set $2^{<\w}$ is dense in $Z$ and the set $\{x\U:x\in 2^\w\}$ of cardinality continuum is closed and discrete in $Z$. These two properties imply that the closed subspace $Z$ of $S$ is not normal (see \cite[1.5.10]{Engelking1989}) and then the space $S$ is not normal, too.
\smallskip

2. Take any chain $D\subset X\setminus\w$ and any free ultrafilter $\V\in\beta(X)$ containing $D$. It can be shown that for any $x\in X$ the product $x\V$ belongs to $2^{<\w}\cup\{\V\}$, which implies that $S:=X\cup\{\V\}$ is a subsemilattice of $\beta(X)$ with a unique non-isolated point.
Being a Tychonoff space with a unique non-isolated point, the space $S$ is zero-dimensional and hereditarily normal.  To see that $S$ is perfectly normal, we need to check that each closed subset $F\subset S$ is of type $G_\delta$. If $\V\notin F$, then the set $F$ is open in $S$ and hence of type $G_\delta$. So, we assume that $\V\in F$. It is easy to see that the chain $D$ is countable and hence can be written as the union $D=\bigcup_{n\in\w}D_n$ of an increasing sequence $(D_n)_{n\in\w}$ of finite sets. For every $n\in\w$ consider the open set  $F_n=F\cup(D\setminus D_n)$ in $S$ and observe that $F=\bigcap_{n\in\w}F_n$, witnessing that $F$ is a $G_\delta$-set in $X$.
\end{proof}

\section{Linear topological semilattices}

A topological semilattice $X$ is called {\em linear} if $X$ is a chain, i.e., $xy\in\{x,y\}$ for all $x,y\in X$. The following characterization of $\eTS$-closed and $\hTS$-closed linear topological semilattices was
proved by Gutik and Repov\v s in \cite{GutikRepovs2008}.

\begin{theorem}[Gutik, Repov\v s]\label{t:GR} For a linear Hausdorff topological semilattice $X$ the following conditions are equivalent:
\begin{enumerate}
\item $X$ is $\hTS$-closed;
\item $X$ is $\eTS$-closed;
\item each non-empty subset $C\subset X$ has $\inf C\in\overline{{\uparrow}C}$ and $\sup C\in\overline{{\downarrow}C}$.
\end{enumerate}
\end{theorem}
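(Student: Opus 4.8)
The plan is to prove the cyclic chain of implications $(1)\Ra(2)\Ra(3)\Ra(1)$, since each step is either a standard argument or reducible to results already established for general (non-linear) topological semilattices. The implication $(1)\Ra(2)$ is free, being one of the trivial implications in Diagram~1 (every $\hTS$-closed semilattice is $\eTS$-closed, because an isomorphic topological embedding is in particular a continuous homomorphism). So the real content lies in $(2)\Ra(3)$ and $(3)\Ra(1)$.

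For the implication $(2)\Ra(3)$, I would argue by contraposition. Suppose condition $(3)$ fails, say there is a non-empty subset $C\subset X$ with $\inf C\notin\overline{{\uparrow}C}$ (the case $\sup C\notin\overline{{\downarrow}C}$ is symmetric under reversing the order). Since $X$ is linear, $\inf C$ exists as an order-theoretic object whenever it exists at all; the failure is that it escapes the relevant closure, i.e. $X$ is not $k$-complete in a suitable sense. The idea is to build an explicit Hausdorff linear topological semilattice $Y\supset X$ together with an isomorphic topological embedding $X\hookrightarrow Y$ whose image is \emph{not} closed, thereby witnessing that $X$ is not $\eTS$-closed. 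Concretely, one adjoins a single new point $p$ playing the role of the missing infimum, declares $p\le x$ for all $x\in{\uparrow}C$ (and fits $p$ into the linear order appropriately), and topologizes $Y$ so that neighborhoods of $p$ are generated by order-intervals together with the traces of neighborhoods witnessing $\inf C\in\overline{{\uparrow}C}$ in the completion. The hypothesis $\inf C\notin\overline{{\uparrow}C}$ is exactly what guarantees $X$ is not already closed in $Y$, i.e. that $p$ is a genuine limit of $X$ outside $X$. Verifying that the extended operation $Y\times Y\to Y$ is jointly continuous and that $Y$ is Hausdorff is the routine-but-fiddly part; linearity makes the order-interval topology manageable.

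For the implication $(3)\Ra(1)$, the strategy is to deduce $\hTS$-closedness from the infimum/supremum-closure condition by reducing to Theorem~\ref{t:main3} and Corollary~\ref{c:main1}. The key observation is that condition $(3)$, for a \emph{linear} semilattice, is equivalent to $k$-completeness: for any non-empty chain $C=X$ itself (every subset of a linear semilattice is a chain), the condition $\inf C\in\overline{{\uparrow}C}$ together with $\sup C\in\overline{{\downarrow}C}$ yields $\inf C,\sup C\in\overline C$ after checking that $\overline{{\uparrow}C}\cap\overline{{\downarrow}C}$ collapses onto $\overline C$ in the linear case. Once $X$ is seen to be $k$-complete, Corollary~\ref{c:main1} gives that $X$ is $\mathsf{p}{:}\!\TS$-closed, hence in particular $\hTS$-closed (taking the closed subsemilattice to be all of $X$). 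This closes the cycle.

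I expect the main obstacle to be the construction in $(2)\Ra(3)$: one must check that the adjoined infimum point $p$ can be inserted into $X$ so that the resulting space is simultaneously Hausdorff, carries a jointly continuous semilattice operation, and admits $X$ as a non-closed subsemilattice. The delicate point is continuity of multiplication at pairs involving $p$, where one needs the order-interval neighborhoods of $p$ to interact correctly with neighborhoods of arbitrary points of $X$; the linearity of $X$ is essential here, as it forces $py=\min\{p,y\}$ to land predictably inside the prescribed neighborhoods. By contrast, the equivalence of condition~$(3)$ with $k$-completeness and the subsequent appeal to Corollary~\ref{c:main1} should be essentially formal.
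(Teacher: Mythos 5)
The paper does not actually prove Theorem~\ref{t:GR}: it is imported verbatim from Gutik--Repov\v s \cite{GutikRepovs2008}, so there is no internal proof to compare against. Judged on its own merits, your proposal has a fatal gap in the implication $(3)\Ra(1)$. You claim that for a linear semilattice condition (3) is equivalent to $k$-completeness, ``after checking that $\overline{{\uparrow}C}\cap\overline{{\downarrow}C}$ collapses onto $\overline C$ in the linear case,'' and you then invoke Corollary~\ref{c:main1}. This equivalence is false, and the paper itself contains the counterexample: in the final Remark, $X=\w\cup\{\w\}$ with the operation of minimum and the topology in which only the even integers accumulate at $\w$ is a non-compact $\hTS$-closed linear topological semilattice. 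There condition (3) holds (e.g.\ for $C$ the set of odd integers one has ${\downarrow}C=\w$, whose closure is all of $X$, so $\sup C=\w\in\overline{{\downarrow}C}$), yet $X$ is not $k$-complete, since $\sup C=\w\notin\overline{C}=C$; the sets $\overline{{\uparrow}C}\cap\overline{{\downarrow}C}$ and $\overline{C}$ do not coincide. More structurally: by Theorem~\ref{t:main3}, a linear $k$-complete semilattice is compact (it is a closed chain in itself), so if (3) implied $k$-completeness then every $\hTS$-closed linear topological semilattice would be compact --- exactly what the Remark refutes. Hence $(3)\Ra(1)$ cannot be obtained as a formal consequence of Corollary~\ref{c:main1}; it needs a direct argument in the spirit of Theorem~\ref{t:main1} (take $y\in\overline{h(X)}\setminus h(X)$ and use the weaker closure conditions of (3) together with linearity to produce a contradiction), which is where the real content of the Gutik--Repov\v s theorem lies.

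Your treatment of the other two implications is less problematic but also thin. The step $(1)\Ra(2)$ is indeed trivial. For $(2)\Ra(3)$ the one-point-extension strategy is the standard and correct route, but everything you label ``routine-but-fiddly'' is the actual proof: one must specify the neighborhood filter of the adjoined point $p$ (essentially $\{p\}\cup({\uparrow}C\setminus{\uparrow}x)$ for $x\in{\uparrow}C$), verify that $Y$ is Hausdorff --- which requires separating $p$ from points $z<\inf C$ that may themselves lie in $\overline{{\uparrow}C}$, so the witnessing set $C$ must be chosen with some care --- and check joint continuity of the extended operation at pairs involving $p$. Also note that condition (3) implicitly asserts the \emph{existence} of $\inf C$ and $\sup C$, so its negation splits into a ``nonexistence'' case and a ``wrong closure'' case, both of which your construction must cover. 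As written, the proposal establishes only the trivial implication.
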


\begin{remark} The equivalence of the conditions (1) and (2) in Theorem~\ref{t:GR} does not hold for non-linear topological semilattices: by \cite{Bardyla-Gutik-2012}, there exists an $\eTS$-closed Hausdorff topological semilattice which is not  $\hTS$-closed.
\end{remark}

The main result of this section is the following characterization of finite linear topological semilattices.

\begin{theorem}\label{t:linear} For a Hausdorff linear (semi)topological semilattice $X$ and a class $\C\subset\sTsLone$ containing the class $\TsLone$ (resp. $\sTsLone$),  the following conditions are equivalent:
\begin{enumerate}
\item $X$ is finite;
\item $X$ is $\mathsf{m}_{\!\mathsf{1}\!}{:}\C$-closed;
\item $X$ is $\pC$-closed;
\item $X$ is $\hC$-closed;
\item $X$ is $\eC$-closed.
\end{enumerate}
\end{theorem}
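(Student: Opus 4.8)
The plan is to run the cycle $(1)\Ra(2)\Ra(3)\Ra(4)\Ra(5)\Ra(1)$, in which every forward implication is essentially free and all the content lies in $(5)\Ra(1)$. For a \emph{linear} semilattice the notions ``chain-finite'' and ``finite'' coincide, because $X$ is itself a chain. Hence Theorem~\ref{t:finite}, applied with the same class $\C$, directly delivers the equivalences $(1)\Leftrightarrow(2)\Leftrightarrow(3)$, i.e.\ $X$ finite $\Leftrightarrow$ $\mathsf{m}_{\!\mathsf{1}\!}{:}\C$-closed $\Leftrightarrow$ $\pC$-closed. The implications $(3)\Ra(4)\Ra(5)$ are instances of the general chain $\pC$-closed $\Ra$ $\hC$-closed $\Ra$ $\eC$-closed recorded before Diagram~1. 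So the whole theorem reduces to showing that an infinite linear Hausdorff (semi)topological semilattice cannot be $\eC$-closed.

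The first move in $(5)\Ra(1)$ is to descend to the boundary classes. Since $X$ is Hausdorff and linear, $X$ lies in $\TsL\subset\TsLone$ in the topological case and in $\sTsL\subset\sTsLone$ in the semitopological case. Because $\C$ contains $\TsLone$ (resp.\ $\sTsLone$), every isomorphic topological embedding of $X$ into a member of $\TsLone$ (resp.\ $\sTsLone$) is in particular an embedding into a member of $\C$; consequently $\eC$-closedness of $X$ implies that $X$ is $\TsLone$-closed (resp.\ $\sTsLone$-closed) in the sense of Lemma~\ref{l:non}. This is precisely the hypothesis required to invoke that lemma.

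The key step is to feed linearity into Lemma~\ref{l:non}(2). For every point $p\in X$, linearity gives $X={\uparrow}p\cup{\downarrow}p$, so the dichotomy of the lemma applies to each $p$: either $p$ is isolated, or $p=xy$ for some $x,y\in X\setminus\{p\}$. But in a chain $xy\in\{x,y\}$, so $p=xy$ would force $p\in\{x,y\}$, contradicting $x,y\neq p$. Hence every point of $X$ is isolated, that is, $X$ is discrete. (A discrete semitopological semilattice is automatically a topological semilattice, since the operation on a discrete space is jointly continuous, so this simultaneously handles the semitopological case.) Now $X$ is a discrete topological semilattice and $\TsLw\subset\TsLone\subset\C\subset\sTsLone$, so the implication $(8)\Ra(1)$ of Theorem~\ref{t:discrete} applies: an $\eC$-closed discrete semilattice is chain-finite, hence finite because $X$ is a chain. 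This establishes $(5)\Ra(1)$ and closes the cycle.

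I expect the only genuine subtlety to be the bookkeeping of which closedness class is in play at each stage: one must note that $\eC$-closedness descends to $\mathsf{e}{:}\TsLone$-closedness (resp.\ $\mathsf{e}{:}\sTsLone$-closedness) so that Lemma~\ref{l:non} is legitimately applicable, and verify that the inclusions $\TsLw\subset\C\subset\sTsLone$ demanded by Theorem~\ref{t:discrete} do hold for the given $\C$ (they follow from $\TsLw\subset\TsL\subset\TsLone\subset\C$). No fresh construction is needed: the pathological $T_1$ witnesses to non-closedness are already packaged inside Lemma~\ref{l:non} and Theorem~\ref{t:discrete}, and linearity is used solely to collapse Lemma~\ref{l:non}(2) to discreteness.
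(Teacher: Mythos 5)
Your proposal is correct and follows essentially the same route as the paper: equivalences $(1)$--$(3)$ via Theorem~\ref{t:finite}, trivial implications $(3)\Ra(4)\Ra(5)$, and for $(5)\Ra(1)$ the use of Lemma~\ref{l:non}(2) (with linearity forcing every point to be isolated, since $p=xy$ is impossible in a chain for $x,y\ne p$) followed by Theorem~\ref{t:discrete} to conclude chain-finiteness and hence finiteness. Your extra bookkeeping about descending from $\eC$-closedness to $\mathsf{e}{:}\TsLone$-closedness and verifying $\TsLw\subset\C\subset\sTsLone$ only makes explicit what the paper leaves implicit.
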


\begin{proof} The equivalence of the conditions $(1)$--$(3)$ follows from Theorem~\ref{t:finite}. The implications $(3)\Ra(4)\Ra(5)$ are trivial. To prove that $(5)\Ra(1)$, assume that the semilattice $X$ is $\eC$-closed. By Lemma~\ref{l:non}(2) each point $p\in X$ is isolated and hence the space $X$ is discrete. Applying Theorem~\ref{t:discrete}, we conclude that the discrete $\eC$-closed semilattice $X$ is chain-finite and being linear, is finite.
\end{proof}

\begin{remark}
Theorem~\ref{t:GR} helps to construct a simple example of a non-compact $\hTS$-closed linear topological semilattice: just take the semilattice $X=\w\cup\{\w\}$ endowed with the semilattice operation of minimum and the topology $\tau=\{U\subset X:\w\in U\;\Ra\;|U\setminus 2\w|<\w\}$ where $2\w=\{2n:n\in\w\}$.
\end{remark}


\end{document}